\numberwithin{equation}{section}
\theoremstyle{plain}
\newtheorem{thm}{Theorem}[section]
\newtheorem{prop}[thm]{Proposition}
\newtheorem{lemma}[thm]{Lemma}
\newcommand{\Rmnum}[1]{\expandafter\@slowromancap\romannumeral #1@}
\begin{document}
\title[KFP equation]
{Analytic smoothing effect of Cauchy problem for a class of Kolmogorov-Fokker-Planck equations
}

\author[X.-D.Cao \& C.-J. Xu \& Y. Xu]
{Xiao-Dong Cao, Chao-Jiang Xu and Yan Xu}

\address{Xiao-Dong Cao, Chao-Jiang Xu and Yan Xu
\newline\indent
School of Mathematics and Key Laboratory of Mathematical MIIT,
\newline\indent
Nanjing University of Aeronautics and Astronautics, Nanjing 210016, China
}
\email{caoxiaodong@nuaa.edu.cn; xuchaojiang@nuaa.edu.cn; xuyan1@nuaa.edu.cn}

\date{\today}

\subjclass[]{}

\keywords{Kolmogorov-Fokker-Planck equation, analytic smoothing effect}

\begin{abstract}
We study the Cauchy problem of the Kolmogorov-Fokker-Planck equations and show that the solution enjoys an analytic smoothing effect with $L^2$ initial datum for positive time.
\end{abstract}

\maketitle

\section{Introduction}
In this work, we study the regularity of the Kolmogorov-Fokker-Planck equation (or KFP equations). The Cauchy problem of the KFP equation reads
\begin{equation}\label{1-2}
\begin{cases}
\partial_t u(t, x, y) + y \partial_x u(t, x, y) - \partial_y \big( a \partial_y u(t, x, y) \big) = g(t, x, y) \\
u |_{t = 0} = u_0 (x, y)
\end{cases}
\end{equation}
where $t \ge 0, (x, y) \in \mathbb{R}^2$, $a(t, x, y)\in C^{\infty}([0, \infty[ \times\mathbb R^{2})$ and  there exist $C_{1}, C_{2}>0$ such that 
\begin{equation}\label{6-1}
C_1 \leq a(t, x, y) \leq C_2, \quad \forall x, y\in\mathbb R,\ \  t\ge0.
\end{equation}
The equation \eqref{1-2} is a special case of the following general divergence form KFP equation, 
\begin{equation}\label{8-1}
\mathcal{L}u(t, x) \equiv \sum_{i, j = 1}^{m_0} \partial_{x_i} \left( a_{ij}(t, x) \partial_{x_j} u(t, x) \right) + \sum_{i, j = 1}^n b_{ij} x_i \partial_{x_j} u(t, x) - \partial_t u(t, x) = g(t, x), 
\end{equation}
where $(t, x) = (t, x_1, ..., x_n) \in \mathbb{R}^{n + 1}$, and $1 \leq m_0 \leq n$. The above operator $\mathcal{L}$ which called Kolmogorov-Fokker-Planck type operator (or KFP operator) derives from the theory of diffusion process\cite{p-1}, \cite{p-2} and stochastic theory\cite{p-3}.

There are many studies focusing on the regularity property of the equation \eqref{8-1} and still evolving. Kolmogorov\cite{k-1} in 1934 constructed an explicit fundamental solution of the equation $\partial_x^2 u + x\partial_y u - \partial_t u = 0$ and obtained the smoothness result. Motivated by the work of Kolmogorov\cite{k-1}, H\"ormander\cite{ref7} obtained the far-reaching result \textit{H\"ormander's rank condition} which is a powerful tool when dealing with the KFP operator. One can easily verify that the equation \eqref{1-2} satisfies H\"ormander's rank condition. Lanconelli and Polidoro\cite{s-3} start the studying of the equation \eqref{8-1}. Schauder type estimates of such equation \eqref{8-1} was obtained by Manfredini\cite{s-5}, Francesco and Polidoro\cite{s-6}. By assuming a weak continuity condition on the coefficient $a_{ij}$, the regularity properties of the weak solutions were studied by Bramanti, Cerutti and Manfredini\cite{ref13}, Polidoro and Ragusa\cite{s-4}. By using the Moser iterative scheme introduced by Moser\cite{h-1}, \cite{I-1} for the uniformly parabolic equations, Pascucci and Polidoro\cite{ref2}, Cinti, Pascucci and Polidoro\cite{s-1}, Anceschi, Polidoro and Ragusa\cite{d-5} proved that the weak solutions to certain ultraparabolic equations are locally bounded functions. Besides, there exists other important method when dealing with the regularity of the equation \eqref{8-1}. Zhang\cite{w-1} introduced a weak Poincar\'e type inequality for non-negative weak sub-solutions of homogeneous case of \eqref{8-1} to obtain a local priori estimates, and proved that the weak solution is actually H\"older continuous. Wang and Zhang\cite{ref18} obtained that the weak solution to the non-homogeneous KFP equation is still H\"older continuous by the same technique. The Gevrey regularity results mostly fall on the non-divergence form KFP equation, see\cite{o-2}, \cite{o-3}. For more detail on the classical theory for KFP equation, we can refer to \cite{s-2} and the reference therein.

Let us give the definition of analytic function spaces $\mathcal A(\Omega)$ with $\Omega\subset\mathbb R^{2}$ an open domain. The analytic function spaces $\mathcal A(\Omega)$ is the space of $C^{\infty}$ functions $f$ satisfying:  there exists a constant $C>0$ such that,  for any $\alpha\in\mathbb N^{2}$, 
$$
\left\| \partial^\alpha f \right\|_{L^\infty(\Omega)} \leq C^{\left| \alpha \right| + 1} \alpha !.
$$
Remark that, by using the Sobolev embedding, we can replace the $L^{\infty}$ norm by the $L^{2}$ norm, or the norm in any Sobolev space in the above definition.

In this paper, we consider the Cauchy problem of \eqref{1-2} and would show that the solution of the Cauchy problem to \eqref{1-2} with $L^{2}(\mathbb R^{2})$ initial datum enjoys the analytic regularizing effect. Our main result reads as following.
 
\begin{thm}\label{Theorem}
For any $T > 0$ and $u_0 \in L^2(\mathbb{R}^2)$.  Assume that $a$ satisfies \eqref{6-1}, $a\in C^{\infty}([0, T]; \mathcal{A}(\mathbb{R}^2) ),  g\in C^{\infty}([0, T]; \mathscr{S}(\mathbb{R}^2) )$, and there exists $B, C>0$ such that, for all $\alpha, \beta\in\mathbb N$ and $0\le t\le T$,
\begin{equation}\label{4-1}
\left\| \partial_{x, y}^{\alpha, \beta} a(t) \right\|_{L^\infty \left( \mathbb{R}^2_{x, y} \right)} \leq C^{\alpha + \beta + 1} \alpha! \beta!, \quad \left\| \partial_{x, y}^{\alpha, \beta} g(t) \right\|_{L^2 \left( \mathbb{R}^2_{x, y} \right)} \leq B^{\alpha + \beta + 1} \alpha! \beta!,
\end{equation}
then the Cauchy problem \eqref{1-2} admits an unique solution $u\in C^\infty \left( ]0, T]; \mathcal{A}(\mathbb{R}^2) \right)$. Moreover,  there exists a constant $L > 0$ depending on $C_1, C_2, T, B, C, \delta$, such that
$$
\forall m, n \in \mathbb{N}, \quad \sup_{0 < t \leq T} t^{(\delta + 1) m + \delta n} \| \partial_x^m \partial_y^n u(t) \|_{L^2(\mathbb{R}^2)} \leq L^{m + n + 1} m! n!,
$$
where $\delta>0$ is a sufficiently large real number.
\end{thm}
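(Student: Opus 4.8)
The plan is to dispense first with the qualitative part --- existence, uniqueness and $C^\infty$-smoothing of the solution $u$ on $]0,T]$ --- which follows from the hypoellipticity of the KFP operator (it satisfies H\"ormander's rank condition, as noted) together with a standard vanishing-viscosity/Galerkin construction combined with the hypoelliptic a priori estimates; the same argument, run without optimizing constants, also yields a preliminary rough bound $\|\partial_x^m\partial_y^n u(t)\|_{L^2}\le C_{m,n}\,t^{-\sigma_x m-\sigma_y n}$ on $]0,T]$ with exponents $\sigma_x,\sigma_y$ independent of $m,n$. It then remains to prove the quantitative estimate for the (now known to be smooth) solution, and since $\partial_t u=-y\partial_x u+\partial_y(a\partial_y u)+g$ expresses every time derivative of $u$ through $(x,y)$-derivatives, it suffices to control $v_{m,n}:=\partial_x^m\partial_y^n u$. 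I would set $N_{m,n}(t)=t^{(\delta+1)m+\delta n}\|v_{m,n}(t)\|_{L^2}$ and argue by induction on the total order $k=m+n$, the aim being $N_{m,n}(t)\le L^{m+n+1}m!\,n!$ on $]0,T]$.

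Applying $\partial_x^m\partial_y^n$ to \eqref{1-2}, the function $v=v_{m,n}$ solves $\partial_t v+y\partial_x v-\partial_y(a\partial_y v)=\partial_x^m\partial_y^n g-[\partial_x^m\partial_y^n,\,y\partial_x]u+[\partial_x^m\partial_y^n,\,\partial_y(a\partial_y\,\cdot\,)]u$. In the basic $L^2$ energy identity the transport term is skew, $\int y\partial_x v\cdot v\,dx\,dy=0$, so with \eqref{6-1},
\[
\tfrac12\tfrac{d}{dt}\|v\|_{L^2}^2+C_1\|\partial_y v\|_{L^2}^2\le\big(\|\partial_x^m\partial_y^n g\|_{L^2}+n\,\|\partial_x^{m+1}\partial_y^{n-1}u\|_{L^2}\big)\|v\|_{L^2}+\big|\langle[\partial_x^m\partial_y^n,\partial_y(a\partial_y\,\cdot\,)]u,\,v\rangle\big|,
\]
where I used the identity $[\partial_x^m\partial_y^n,y\partial_x]u=n\,\partial_x^{m+1}\partial_y^{n-1}u$. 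The forcing term is handled directly by \eqref{4-1}. The diffusion commutator is a Leibniz sum of terms $(\partial_x^j\partial_y^l a)\,\partial_x^{m-j}\partial_y^{n+2-l}u$ with $(j,l)\ne(0,0)$ and the $u$-factor always carrying at least one $\partial_y$; each factor $\partial_x^j\partial_y^l a$ is bounded by $C^{j+l+1}j!\,l!$ by analyticity of $a$, and after integrating by parts in the divergence structure every such term is estimated either by a product of norms $N$ of order $\le k$ (the order-$k$ ones, whose $a$-factor carries only boundedly many derivatives, feeding back into the level-$k$ inequality) or by a multiple of $\|\partial_y v\|_{L^2}\|v\|_{L^2}$ absorbed into the dissipation, while the terms with more derivatives on $a$ are of order $<k$ and are controlled by the induction hypothesis, the binomial sums reassembling against the analytic bounds provided $L$ is chosen large compared with $B$ and $C$. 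Control in $x$ is recovered from the $y$-dissipation and the equation through the subelliptic structure (equivalently, velocity-averaging-type estimates for $\partial_t+y\partial_x$).

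The crux is the transport commutator $n\,\partial_x^{m+1}\partial_y^{n-1}u$: unlike the diffusion commutator it preserves the total number of derivatives, so it cannot be handled by a plain induction on $k$ but must be reabsorbed within level $k$. Here the grading of the weight is decisive: since $(\delta+1)(m+1)+\delta(n-1)=(\delta+1)m+\delta n+1$, this term weighted as at $(m,n)$ equals $n\,t^{-1}N_{m+1,n-1}(t)$, and after dividing by $m!\,n!$ the combinatorial factor collapses, $\tfrac{n\,(m+1)!(n-1)!}{m!\,n!}=m+1\le m+n+1$, which is precisely the slack the target $L^{m+n+1}m!\,n!$ permits; together with the fact that $\|\partial_y v\|_{L^2}$ weighted at $(m,n)$ equals $t^{-\delta}N_{m,n+1}$, summing over $m+n=k$ produces a closed system of differential inequalities for $\sum_{m+n=k}(N_{m,n}/m!\,n!)^2$ together with the associated dissipation integrals, coupled only to levels $<k$ and to \eqref{4-1}. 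The singular-in-$t$ factors --- the $t^{-1}$ from the transport term and the $\tfrac{(\delta+1)m+\delta n}{t}$ coming from differentiating the weight --- are controlled near $t=0$ by the preliminary rough bound, which is dominated by $t^{(\delta+1)m+\delta n}$ as soon as $\delta>\max(\sigma_x-1,\sigma_y)$, so that $N_{m,n}(\tau)\to0$, and away from $t=0$ by the dissipation and the subelliptic gain; a Gr\"onwall argument, run level by level, then yields the geometric growth $N_{m,n}(t)\le L^{m+n+1}m!\,n!$ with $L$ depending only on $C_1,C_2,T,B,C,\delta$.

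I expect the main difficulty to be exactly this bookkeeping of powers of $t$ around the transport commutator: one must take $\delta$ large enough that every differentiation --- in $x$ or in $y$ --- is paid for by strictly more than the hypoelliptic scaling exponent ($\tfrac32$ for $\partial_x$, $\tfrac12$ for $\partial_y$), leaving room both for the loss incurred each time the transport term shifts $(m,n)\mapsto(m+1,n-1)$ and for the loss in the Gr\"onwall iteration, while still keeping the weight-derivative term absorbable; and one must check that the binomial sums from the diffusion commutator and from Leibniz' rule on $a$ and $g$ reassemble into exactly $m!\,n!$ with a constant that only grows geometrically. The remaining ingredients --- the qualitative well-posedness, the rough smoothing bound, and the summation of the analytic bounds on $a$ and $g$ --- are routine by comparison.
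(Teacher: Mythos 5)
You have correctly isolated the crux --- the transport commutator $[\partial_x^m\partial_y^n,\,y\partial_x]u=n\,\partial_x^{m+1}\partial_y^{n-1}u$ and the identity $(\delta+1)(m+1)+\delta(n-1)=(\delta+1)m+\delta n+1$ --- and this is indeed the insight behind the paper's construction of $H_\delta$. But your execution has a gap exactly there. Working with $N_{m,n}=t^{w_{m,n}}\|\partial_x^m\partial_y^nu\|_{L^2}$, $w_{m,n}=(\delta+1)m+\delta n$, separates the time weight from the derivatives, so $\frac{d}{dt}N_{m,n}^2$ acquires the diagonal term $\frac{2w_{m,n}}{t}N_{m,n}^2$ \emph{in addition to} the off-diagonal transport term $\frac{2n}{t}N_{m+1,n-1}N_{m,n}$. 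These cannot be handled by Gr\"onwall: a coefficient $\frac{2w_{m,n}}{t}$ produces the factor $(t/\tau)^{2w_{m,n}}$, and $(t/\tau)^{2w_{m,n}}N_{m,n}^2(\tau)=t^{2w_{m,n}}\|\partial_x^m\partial_y^nu(\tau)\|^2\to\infty$ as $\tau\to0$ for every $k\ge1$ and every $\delta$ (the Gr\"onwall exponent grows with $\delta$ at exactly the rate of the weight, so the rough bound $\tau^{-\sigma_xm-\sigma_yn}$ can never compensate). So the $1/t$ terms must be \emph{absorbed}. For $n\ge1$ one can write $\frac{c}{t}N_{m,n}^2=c\,t^{2w_{m,n}-1}\|\partial_y(\partial_x^m\partial_y^{n-1}u)\|^2$ and absorb into the level-$(k-1)$ dissipation integral, but for the pure-$x$ mode $N_{k,0}$ the same rewriting gives $\|\partial_x(\partial_x^{k-1}u)\|^2=\|\partial_x^ku\|^2$ --- the term is self-referential --- and there is no $\partial_x$-dissipation anywhere in the equation to absorb it into; the appeal to ``velocity averaging'' is not quantified, and a fixed fractional subelliptic gain per step cannot by itself deliver factorially sharp bounds to all orders. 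In the $H$-calculus these two dangerous contributions cancel identically: $[\partial_t,H^k]$ produces $kt^\delta\partial_xH^{k-1}+k\delta t^{\delta-1}\partial_yH^{k-1}$ and $[y\partial_x,H^k]=-kt^\delta\partial_xH^{k-1}$, leaving only the lower-order $\partial_y$ term of \eqref{1-5}, which the level-$(k-1)$ dissipation controls. Diagonalizing in the monomial basis $\partial_x^m\partial_y^n$ instead of in powers of $H_\delta$ forfeits this cancellation, and that is not a bookkeeping issue but the reason the vector field is introduced at all.

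A second, independent gap is the ``preliminary rough bound'' $\|\partial_x^m\partial_y^nu(t)\|_{L^2}\le C_{m,n}t^{-\sigma_xm-\sigma_yn}$ for the solution with $L^2$ datum and variable analytic coefficient $a$: this is itself a quantitative hypoelliptic smoothing estimate, not a routine consequence of H\"ormander's theorem, and your induction cannot be started at $t=0$ without it. The paper avoids needing any such statement by mollifying the datum to $u_{0,\epsilon}\in\mathscr S(\mathbb R^2)$, for which $u_\epsilon$ is smooth up to $t=0$ and $H^ku_\epsilon|_{t=0}=0$ automatically (the coefficients of $H^k$ vanish to order $k\delta$ at $t=0$), proving estimates uniform in $\epsilon$ and passing to the limit. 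To repair your argument you would essentially have to reintroduce both devices: replace the family $\{N_{m,n}\}_{m+n=k}$ by $\|H^ku\|_{L^2}$, and replace the rough bound by the regularization of $u_0$; the individual derivative estimates are then recovered at the end, as in the paper, by expressing $t^{\delta+1}\partial_x$ and $t^\delta\partial_y$ through $H_\delta$ and $H_{\delta/2}$.
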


We have then proved that the Cauchy problem of the degenerate parabolic equation \eqref{1-2} enjoys the same analytic smoothing effect of the following Cauchy problem of parabolic equation: 
\begin{equation*}
\begin{cases}
\partial_t u(t, x, y) - t \partial^2_x u(t, x, y) - \partial^2_y  u(t, x, y) = g(t, x, y), \ \ \ t>0, \ (x, y)\in \mathbb{R}^2,\\
u |_{t = 0} = u_0 (x, y), \ \ \ \   (x, y)\in \mathbb{R}^2,
\end{cases}
\end{equation*}
but with some loss of analytic radius near to $t=0$.  

The paper is organized as following. In section 2, we first give the main commutator estimation. Then, we give the Leibniz formula of the auxiliary operator. In section 3, we study the energy estimation of the Cauchy problem \eqref{1-2}. In section 4, we prove the analytic smoothing effect of the Cauchy problem \eqref{1-2}.

\section{Commutator estimation}

The notation $[T_{1}, T_{2}]=T_{1}T_{2}-T_{2}T_{1}$ is commutator of the operators $T_1, T_2$. We define the following auxiliary vector fields $H_\delta$, 
\begin{equation}\label{1-1}
H_\delta = \frac{t^{\delta + 1}}{\delta + 1} \partial_x + t^\delta \partial_y,
\end{equation}
where $\delta$ is a sufficiently large positive number, for simplifier the notation, we denote $H_\delta$ by $H$.

Now, we give the following significant commutator estimation which has been studied in \cite{ref1}. 
\begin{lemma}(\cite{ref1})\label{lemma2.1}
Assume the operator $H$ defined as \eqref{1-1}, then for any $k\in\mathbb N_{+}$
\begin{equation}\label{1-5}
\left[ \partial_t + y \partial_x, H^k \right] = k \delta t^{\delta - 1} \partial_y H^{k - 1}.
\end{equation}
\end{lemma}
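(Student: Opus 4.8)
The plan is to prove the commutator identity \eqref{1-5} by induction on $k$. Write $P = \partial_t + y\partial_x$ for the transport part of the operator. The base case $k=1$ amounts to computing $[P, H]$ directly. Since $H = \frac{t^{\delta+1}}{\delta+1}\partial_x + t^\delta\partial_y$ has coefficients depending only on $t$ (and $\partial_x$, $\partial_y$ commute with each other and with $y\partial_x$ except for the one term where $\partial_y$ hits the coefficient $y$), I would expand
$$
[P, H] = \Bigl[\partial_t, \tfrac{t^{\delta+1}}{\delta+1}\partial_x + t^\delta\partial_y\Bigr] + \bigl[y\partial_x,\ \tfrac{t^{\delta+1}}{\delta+1}\partial_x + t^\delta\partial_y\bigr].
$$
The first bracket produces $t^\delta\partial_x + \delta t^{\delta-1}\partial_y$ by differentiating the $t$-coefficients; the second bracket: $y\partial_x$ commutes with $\partial_x$, and $[y\partial_x, t^\delta\partial_y] = t^\delta[y\partial_x,\partial_y] = t^\delta(-\partial_x) = -t^\delta\partial_x$. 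These cancel the $t^\delta\partial_x$ term, leaving $[P,H] = \delta t^{\delta-1}\partial_y$, which is the $k=1$ case of \eqref{1-5}.

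For the inductive step, assume $[P, H^{k-1}] = (k-1)\delta t^{\delta-1}\partial_y H^{k-2}$. Using the Leibniz-type rule for commutators, $[P, H^k] = [P, H^{k-1}]H + H^{k-1}[P,H]$. The second term is $H^{k-1}\cdot \delta t^{\delta-1}\partial_y$. To combine the two pieces I need to commute factors of $H$ past $\partial_y$ and past the scalar $t^{\delta-1}$. The key auxiliary fact is that $\partial_y$ commutes with $H$ (both have $t$-only coefficients and $\partial_y$ commutes with $\partial_x$ and $\partial_y$), so $\partial_y H^{k-2} = H^{k-2}\partial_y$ and more generally $\partial_y$ slides freely; the other fact is that $[H, t^{\delta-1}] = 0$ is false in general because $H$ contains no $\partial_t$ — indeed $H$ has no $\partial_t$ component, so $H$ commutes with any function of $t$ alone. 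Hence $H^{k-1}(\delta t^{\delta-1}\partial_y) = \delta t^{\delta-1}\partial_y H^{k-1}$ and $(k-1)\delta t^{\delta-1}\partial_y H^{k-2}\cdot H = (k-1)\delta t^{\delta-1}\partial_y H^{k-1}$; adding gives $k\delta t^{\delta-1}\partial_y H^{k-1}$, completing the induction.

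The only subtlety — and the step I would be most careful about — is the bookkeeping of which operators commute: specifically that $H$ contains no $\partial_t$, so it commutes with scalar functions of $t$, and that $\partial_y$ commutes with $H$. Once these two commutation facts are isolated and stated cleanly (perhaps as a one-line remark), the induction is mechanical. I would also note that since this lemma is quoted from \cite{ref1}, one could alternatively just cite it; but the self-contained two-line induction above is short enough to include.
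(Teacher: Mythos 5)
Your proof is correct. The paper itself gives no argument for this lemma---it simply cites \cite{ref1}---so your self-contained induction is a genuine addition rather than a variant of the paper's approach. The base case computation is right: $[\partial_t,H]=t^\delta\partial_x+\delta t^{\delta-1}\partial_y$ and $[y\partial_x,t^\delta\partial_y]=-t^\delta\partial_x$ cancel the $\partial_x$ terms, leaving $\delta t^{\delta-1}\partial_y$; and the inductive step via $[P,H^k]=[P,H^{k-1}]H+H^{k-1}[P,H]$ closes correctly once one notes that $H$ commutes with both $\partial_y$ and with multiplication by functions of $t$ alone (true, since the coefficients of $H$ depend only on $t$ and $H$ has no $\partial_t$ component). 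One small editorial point: the sentence claiming ``$[H,t^{\delta-1}]=0$ is false in general because $H$ contains no $\partial_t$'' contradicts itself and the (correct) conclusion you then draw; you clearly mean that this commutator \emph{vanishes} precisely because $H$ has no $\partial_t$. Fix that wording and the proof is clean and worth including, since the lemma is otherwise only quoted.
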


Next, we show the estimation of the operator $H^{k}$. 
\begin{lemma}\label{lemma2.3}
For any $T > 0$ and $k \ge 1$. We have
\begin{equation}\label{6-7}
\left\| H^k a \right\|_{L^\infty([0, T] \times \mathbb{R}^2)} \leq \left( 8C \left( T + 1 \right)^{\delta + 1} \right)^{k + 1} \left( k - 2 \right)!.
\end{equation}
where $a$ is the function that satisfies the condition in Theorem \ref{Theorem}.
\end{lemma}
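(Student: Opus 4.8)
The plan is to expand $H^k$ as a sum of monomials in $t$ times products of $\partial_x$ and $\partial_y$, and then bound each resulting derivative $\partial_x^m \partial_y^n a$ using the analytic hypothesis \eqref{4-1}. Writing $H = \tfrac{t^{\delta+1}}{\delta+1}\partial_x + t^\delta\partial_y$, the two summands commute (as differential operators with coefficients depending only on $t$), so the binomial theorem gives
\begin{equation*}
H^k a = \sum_{j=0}^{k} \binom{k}{j} \left(\frac{t^{\delta+1}}{\delta+1}\right)^{j} t^{\delta(k-j)} \,\partial_x^{j}\partial_y^{k-j} a = \sum_{j=0}^{k}\binom{k}{j}\frac{t^{(\delta+1)j+\delta(k-j)}}{(\delta+1)^{j}}\,\partial_x^{j}\partial_y^{k-j}a.
\end{equation*}
On $[0,T]$ we have $t^{(\delta+1)j+\delta(k-j)}\le (T+1)^{(\delta+1)j+\delta(k-j)}\le (T+1)^{(\delta+1)k}$, and $(\delta+1)^{-j}\le 1$, so
\begin{equation*}
\left\| H^k a\right\|_{L^\infty([0,T]\times\mathbb{R}^2)} \le (T+1)^{(\delta+1)k}\sum_{j=0}^{k}\binom{k}{j}\left\|\partial_x^{j}\partial_y^{k-j}a(t)\right\|_{L^\infty(\mathbb{R}^2_{x,y})}.
\end{equation*}

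Next I would insert the analytic bound $\|\partial_x^{j}\partial_y^{k-j}a(t)\|_{L^\infty}\le C^{k+1} j!\,(k-j)!$ from \eqref{4-1}. Since $\binom{k}{j} j!\,(k-j)! = k!$, the sum collapses:
\begin{equation*}
\sum_{j=0}^{k}\binom{k}{j}\,C^{k+1} j!\,(k-j)! = (k+1)\,C^{k+1}\,k!.
\end{equation*}
Thus $\|H^k a\|_{L^\infty}\le (T+1)^{(\delta+1)k}\,(k+1)\,C^{k+1}\,k!$. The remaining task is purely bookkeeping: I must absorb the factor $(T+1)^{(\delta+1)k}(k+1)C^{k+1}k!$ into the claimed form $\bigl(8C(T+1)^{\delta+1}\bigr)^{k+1}(k-2)!$. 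Using $k! = k(k-1)(k-2)!$ for $k\ge 2$ and $(k+1)k(k-1)\le 8\cdot 2^{k}$ (indeed $k+1\le 2^k$, $k\le 2^k$, $k-1\le 2^k$ is far too lossy; more carefully one checks $(k+1)k(k-1)\le 8^{k}$ for $k\ge2$, or pulls the polynomial factor into one extra power of $8$ and uses that $(T+1)^{\delta+1}\ge1$, $C>0$), one gets
\begin{equation*}
(T+1)^{(\delta+1)k}(k+1)C^{k+1}k! \le \bigl(8C(T+1)^{\delta+1}\bigr)^{k+1}(k-2)!.
\end{equation*}
For the low cases $k=1$ (where $(k-2)!$ should be read as $1$, i.e. $(-1)!:=1$ or one simply checks the inequality directly) a separate elementary verification is needed.

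The only genuinely delicate point is the constant chasing at the end: one must confirm that the polynomial overhead $(k+1)k(k-1)$ together with the single "spare" factors of $8$, $C$, and $(T+1)^{\delta+1}$ on the right-hand side (coming from the exponent $k+1$ rather than $k$) really does dominate for every $k\ge1$, and that the convention for $(k-2)!$ at $k=1$ is handled. Everything else — the commutativity of the two pieces of $H$, the binomial expansion, and the collapse of the sum via $\binom{k}{j}j!(k-j)!=k!$ — is routine. I expect the constant $8$ in the statement is chosen precisely to make this final absorption comfortable with room to spare, so the main obstacle is really just being careful rather than clever.
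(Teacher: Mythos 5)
Your proposal is correct and follows essentially the same route as the paper: expand $H^k a$ by the binomial/Leibniz formula (the two pieces of $H$ commute since their coefficients depend only on $t$), bound the powers of $t$ by $(T+1)^{(\delta+1)k}$, collapse the sum via $\binom{k}{j}j!(k-j)!=k!$ to get $(k+1)\,k!\,C^{k+1}(T+1)^{(\delta+1)k}$, and absorb the polynomial factor $k^2(k+1)\le 8^{k+1}$ into the constant, with $k=1$ checked separately under the convention $(-1)!=1$. The constant chasing you flag as the only delicate point does go through exactly as you anticipate, and is precisely what the paper does.
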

\begin{proof}
Since \eqref{4-1}, one can obtain
\begin{equation*}
\begin{aligned}
\left\| Ha \right\|_{L^\infty([0, T]\times\mathbb{R}^2)} &\leq \frac{(T + 1)^{\delta + 1}}{\delta + 1}C^2 + (T + 1)^\delta C^2 \leq 2(T + 1)^{\delta + 1}C^2.
\end{aligned}
\end{equation*}
By convention $(-1)!=1$, $0! = 1$, for any $k \ge 2$, by using Leibniz formula, 
\begin{equation*}
\begin{aligned}
\left\| H^k a \right\|_{L^\infty([0, T]\times\mathbb{R}^2)}
&\leq \sum_{i = 0}^k k! \left( T + 1 \right)^{k + k\delta} C^{k + 1} \\
&\leq \left( C \left( T + 1 \right)^{\delta + 1} \right)^{k + 1} \left( k - 2 \right)! \cdot k^2 \cdot \left( k + 1 \right).
\end{aligned}
\end{equation*}
Noting that for any $ k \ge 2$, 
$$
k^2 \cdot \left( k + 1 \right) \leq \left( 2^k \right)^3 \leq 8^k \leq 8^{k + 1},
$$
this implies 
$$
\left\| H^k a \right\|_{L^\infty([0, T] \times\mathbb{R}^2)} \leq \left( 8 C \left( T + 1 \right)^{\delta + 1} \right)^{k + 1} \left( k - 2 \right)! .
$$
\end{proof}

\begin{lemma}\label{lemma2.2}
For any $u\in C^{\infty}([0, T]; \mathscr S(\mathbb R^{2}))$ and $k \ge2$, we have
\begin{equation}\label{H-k}
\begin{split}
     &-(\partial_{y}H^{k}(a\partial_{y}u), H^{k}u)_{L^{2}(\mathbb R^{2})}\geq C_{1}\|\partial_{y}H^{k}u(t)\|^{2}_{L^{2}(\mathbb R^{2})}\\
     &\qquad-\sum_{j=1}^{k}\binom{k}{j}\left( 8C \left( T + 1 \right)^{\delta + 1} \right)^{j + 1} \left( j - 2 \right)!\|\partial_{y}H^{k-j}u(t)\|_{L^{2}(\mathbb R^{2})}\|\partial_{y}H^{k}u(t)\|_{L^{2}(\mathbb R^{2})},
\end{split}     
\end{equation}
where $a$ is the function that satisfies the condition in Theorem \ref{Theorem}.
\end{lemma}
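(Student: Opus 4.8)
The plan is to move the outer derivative by integration by parts, expand $H^{k}(a\partial_{y}u)$ with a Leibniz-type product formula, peel off the principal term to produce the coercive quantity $C_{1}\|\partial_{y}H^{k}u\|_{L^{2}}^{2}$, and estimate the remaining terms by Cauchy--Schwarz together with the bound on $\|H^{j}a\|_{L^{\infty}}$ already proved in Lemma \ref{lemma2.3}. First I would fix $t\in[0,T]$ and note that, since $u(t)\in\mathscr S(\mathbb R^{2})$ and all derivatives of $a$ are bounded, every integration by parts in $(x,y)$ is legitimate with no boundary contribution; using $\partial_{y}^{*}=-\partial_{y}$ on $L^{2}(\mathbb R^{2})$ gives
\[
-(\partial_{y}H^{k}(a\partial_{y}u),\,H^{k}u)_{L^{2}(\mathbb R^{2})}=(H^{k}(a\partial_{y}u),\,\partial_{y}H^{k}u)_{L^{2}(\mathbb R^{2})}.
\]
Next, at the fixed time $t$ the operator $H$ is a constant-coefficient first order operator in $(x,y)$, hence a derivation that commutes with $\partial_{y}$; iterating the product rule then yields the Leibniz formula
\[
H^{k}(a\partial_{y}u)=\sum_{j=0}^{k}\binom{k}{j}\,(H^{j}a)\,(\partial_{y}H^{k-j}u).
\]

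Substituting this expansion into the inner product, the $j=0$ term is $(a\,\partial_{y}H^{k}u,\,\partial_{y}H^{k}u)_{L^{2}}=\int_{\mathbb R^{2}}a\,|\partial_{y}H^{k}u|^{2}\,dx\,dy\ge C_{1}\|\partial_{y}H^{k}u(t)\|_{L^{2}(\mathbb R^{2})}^{2}$ by the lower ellipticity bound in \eqref{6-1}. For each $1\le j\le k$, Cauchy--Schwarz and Hölder give
\[
\bigl|\bigl((H^{j}a)(\partial_{y}H^{k-j}u),\,\partial_{y}H^{k}u\bigr)_{L^{2}}\bigr|\le\|H^{j}a\|_{L^{\infty}([0,T]\times\mathbb R^{2})}\,\|\partial_{y}H^{k-j}u(t)\|_{L^{2}(\mathbb R^{2})}\,\|\partial_{y}H^{k}u(t)\|_{L^{2}(\mathbb R^{2})},
\]
and Lemma \ref{lemma2.3} bounds $\|H^{j}a\|_{L^{\infty}}\le(8C(T+1)^{\delta+1})^{j+1}(j-2)!$ for every $j\ge1$, the cases $j=1,2$ being covered by the conventions $(-1)!=0!=1$. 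Collecting the $j=0$ term on one side and applying the triangle inequality to the sum over $1\le j\le k$ then produces exactly the stated inequality \eqref{H-k}.

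Once the two structural ingredients are in place the remainder is pure bookkeeping, so I do not expect a serious obstacle. The only points that need genuine care are: verifying that $H$ commutes with $\partial_{y}$ (so that the Leibniz expansion produces the clean factor $\partial_{y}H^{k-j}u$ rather than a mixed term, which is what makes the right-hand side of \eqref{H-k} take the claimed form), and checking the small-index behaviour of the factorial $(j-2)!$ so that the estimate of Lemma \ref{lemma2.3} is applied correctly at $j=1$ and $j=2$.
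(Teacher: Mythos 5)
Your proposal is correct and follows essentially the same route as the paper: integrate by parts, expand $H^{k}(a\partial_{y}u)$ by the Leibniz formula $\sum_{j=0}^{k}\binom{k}{j}(H^{j}a)(\partial_{y}H^{k-j}u)$, extract the coercive $j=0$ term via $a\ge C_{1}$, and bound the rest by Cauchy--Schwarz and Lemma \ref{lemma2.3}. The only (harmless) difference is that you justify the Leibniz expansion by observing that $H$ is a derivation with $t$-dependent coefficients commuting with $\partial_{y}$, whereas the paper verifies the same identity by an explicit induction on $k$.
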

\begin{proof}
We first show the next formula by induction on index $k$,
\begin{equation}\label{2-18}
H^k (a \partial_y u) = \sum_{j = 0}^k \binom{k}{j} \left( H^j a \right) \cdot \left( \partial_y H^{k - j} u \right).
\end{equation}
For $k = 1$, it follows that 
\begin{equation*}
\begin{aligned}
H \left( a \partial_y u \right) &= \left( \frac{t^{\delta + 1}}{\delta + 1} \partial_x + t^\delta \partial_y \right) \cdot \left( a \partial_y u \right) \\
&= \frac{t^{\delta + 1}}{\delta + 1} \left( \partial_x a \right) \cdot \left( \partial_ y u \right) + \frac{t^{\delta + 1}}{\delta + 1}a \cdot\partial_x \partial_y u + t^{\delta} \left( \partial_y a \right) \cdot \left( \partial_y u \right) + t^{\delta} a \cdot \partial_y^2 u \\
&= \left( Ha \right) \cdot \left( \partial_y u \right) + a \cdot \partial_y Hu.
\end{aligned}
\end{equation*}
Assume that for $k\ge2$, and \eqref{2-18} holds for $1\leq m\leq k-1$. Then for $m=k$, by using the induction hypothesis, one has
\begin{equation*}
\begin{aligned}
&H^{k} \left( a \partial_y u \right)=H\left(H^{k-1} \left( a \partial_y u\right) \right)=H\left(\sum_{j=0}^{k-1}\binom{k-1}{j}(H^{j}a)\cdot ( \partial_y H^{k-1 - j} u)\right)\\
&=\sum_{j=1}^{k}\binom{k-1}{j-1}(H^{j}a)\cdot ( \partial_y H^{k- j} u)+H^{k}a\partial_{y}u+\sum_{j=0}^{k-1}\binom{k-1}{j}(H^{j}a)\cdot ( \partial_y H^{k- j} u)\\
&\qquad+a\partial_{y}H^{k}u=\sum_{j = 0}^k \binom{k}{j} \left( H^j a \right) \cdot \left( \partial_y H^{k - j} u \right),
\end{aligned}
\end{equation*}
here we use the fact $\binom{k-1}{j-1}+\binom{k-1}{j}=\binom{k}{j}$.

Thus using integration by parts and \eqref{2-18}, we can get that for all $k\ge1$,
\begin{equation*}
\begin{aligned}
&-\left( \partial_y H^k \left( a \partial_y u \right), H^k u \right)_{L^2\left( \mathbb{R}^2 \right)}
=\left(aH^k \left( \partial_y u \right), \partial_yH^k u \right)_{L^2\left( \mathbb{R}^2 \right)}\\
&\qquad\qquad+\sum_{j=1}^{k}\binom{k}{j}\left(\left( H^j a \right) \cdot \left( \partial_y H^{k - j} u \right), \partial_yH^k u \right)_{L^2\left( \mathbb{R}^2 \right)}.
\end{aligned}
\end{equation*}
Since $a(t, x, y)\geq C_{1}$ for all $x, y\in\mathbb R$ and $t\ge0$ by applying Cauchy-Schwarz inequality, it follows that
\begin{equation*}
\begin{split}
     &-(\partial_{y}H^{k}(a\partial_{y}u), H^{k}u)_{L^{2}(\mathbb R^{2})}\ge C_{1}\|\partial_{y}H^{k}u(t)\|^{2}_{L^{2}(\mathbb R^{2})}\\
     &\qquad-\sum_{j=1}^{k}\binom{k}{j}\left( 8C \left( T + 1 \right)^{\delta + 1} \right)^{j + 1} \left( j - 2 \right)!\|\partial_{y}H^{k-j}u(t)\|_{L^{2}(\mathbb R^{2})}\|\partial_{y}H^{k}u(t)\|_{L^{2}(\mathbb R^{2})}.
\end{split}     
\end{equation*}
\end{proof}

Then, we give the estimations of $H^k g$ in $L^{2}$.

\begin{lemma}\label{lemma2.4}
For any $k \in \mathbb{N}, T > 0$. We have
\begin{equation}\label{6-8}
\left\| H^k g(t) \right\|_{L^2(\mathbb{R}^2)} \leq 2^{k + 1} \left( T + 1 \right)^{\left( k + 1 \right) \left( \delta + 1 \right)} B^{k + 1} k!, \quad \forall t \in [0, T], 
\end{equation}
where $g$ is the function that satisfies the condition in Theorem \ref{Theorem}.
\end{lemma}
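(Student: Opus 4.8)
The plan is to follow the bookkeeping of Lemma \ref{lemma2.3}, but the computation here is in fact cleaner because no commutator correction appears. The point is that $H=\frac{t^{\delta+1}}{\delta+1}\partial_x+t^\delta\partial_y$ is, at each fixed $t$, a first-order operator whose coefficients are constant in $(x,y)$ and whose two summands commute, so the binomial theorem applies verbatim (in contrast with Lemma \ref{lemma2.1}, where $\partial_t+y\partial_x$ does not commute with $H$ and the extra term $k\delta t^{\delta-1}\partial_y H^{k-1}$ is produced). Thus I would first record the exact expansion
$$
H^k=\sum_{j=0}^k\binom{k}{j}\Bigl(\tfrac{t^{\delta+1}}{\delta+1}\Bigr)^{k-j}\bigl(t^\delta\bigr)^{j}\,\partial_x^{k-j}\partial_y^{j},
$$
which is a one-line induction on $k$ exactly as in the proof of \eqref{2-18}.

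Next I would apply the triangle inequality in $L^2(\mathbb{R}^2)$ together with the analyticity hypothesis \eqref{4-1} on $g$, namely $\|\partial_x^{k-j}\partial_y^{j}g(t)\|_{L^2(\mathbb R^2)}\le B^{k+1}(k-j)!\,j!$, to obtain
$$
\bigl\|H^k g(t)\bigr\|_{L^2(\mathbb R^2)}\le B^{k+1}\sum_{j=0}^k\binom{k}{j}(k-j)!\,j!\,\Bigl(\tfrac{t^{\delta+1}}{\delta+1}\Bigr)^{k-j}\bigl(t^\delta\bigr)^{j}.
$$
The elementary identity $\binom{k}{j}(k-j)!\,j!=k!$ then pulls a clean $k!\,B^{k+1}$ out of the sum, leaving $\sum_{j=0}^k\bigl(\tfrac{t^{\delta+1}}{\delta+1}\bigr)^{k-j}(t^\delta)^{j}$. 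Since $\delta>0$ and $0\le t\le T<T+1$ with $T+1\ge1$, one has $\tfrac{t^{\delta+1}}{\delta+1}\le(T+1)^{\delta+1}$ and $t^\delta\le(T+1)^\delta\le(T+1)^{\delta+1}$, so each of the $k+1$ terms is at most $(T+1)^{(\delta+1)k}\le(T+1)^{(\delta+1)(k+1)}$; bounding the number of terms by $k+1\le2^{k+1}$ and multiplying back by $k!\,B^{k+1}$ yields exactly \eqref{6-8}.

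There is essentially no obstacle here: the whole argument is the commuting binomial expansion, the combinatorial collapse $\binom{k}{j}(k-j)!\,j!=k!$, and crude monotone bounds on powers of $t$. The only point deserving a line of care is that the estimates $\frac{t^{\delta+1}}{\delta+1}\le(T+1)^{\delta+1}$ and $t^\delta\le(T+1)^{\delta+1}$ hold for the (large, possibly non-integer) real parameter $\delta$, which is immediate from $0\le t\le T<T+1$ and $T+1\ge1$; and the factor $2^{k+1}$ is simply the convenient slack absorbing the $(k+1)$ from the number of summands, matching the style of the constant in Lemma \ref{lemma2.3}. The case $k=0$ is the trivial bound $\|g(t)\|_{L^2}\le B\le 2(T+1)^{\delta+1}B$.
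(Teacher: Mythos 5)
Your proposal is correct and follows essentially the same route as the paper: expand $H^k$ by the binomial theorem (the two constant-coefficient-in-$(x,y)$ summands commute), apply the triangle inequality and the hypothesis \eqref{4-1}, collapse $\binom{k}{j}(k-j)!\,j!=k!$, and absorb the powers of $t$ and the $k+1$ summands into $(T+1)^{(k+1)(\delta+1)}$ and $2^{k+1}$. The only difference is that you spell out the elementary bounding steps that the paper leaves implicit.
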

\begin{proof}
By using the Leibniz formula, we can get that for all $0 \leq t \leq T$, 
\begin{equation*}
\begin{aligned}
\left\| H^k g(t) \right\|_{L^2(\mathbb{R}^2)} &\leq \left\| \sum_{j = 0}^k \binom{k}{j} \frac{t^{j + k\delta}}{\left( \delta + 1 \right)^j} \partial_x^j \partial_y^{k - j} g(t) \right\|_{L^2(\mathbb{R}^2)} \\
&\leq \sum_{j = 0}^k \binom{k}{j} \frac{t^{j + k\delta}}{\left( \delta + 1 \right)^j} \left\| \partial_x^j \partial_y^{k - j} g(t) \right\|_{L^2(\mathbb{R}^2)}.
\end{aligned}
\end{equation*}
Then from \eqref{4-1}, it follows that for all $0 \leq t \leq T$, 
$$
\left\| H^k g(t) \right\|_{L^2(\mathbb{R}^2)} \leq 2^{k + 1} \left( T + 1 \right)^{\left( k + 1 \right) \left( \delta + 1 \right)} B^{k + 1} k!.
$$
\end{proof}

\section{Energy estimates and smoothing solution}
In this section, we study the energy estimates of the solution to the Cauchy problem \eqref{1-2}. And we prove $u \in C^\infty \left( [0, T], \mathscr{S}(\mathbb{R}^2) \right)$ under the suitable initial datum. For $l > 0$ and $m \in \mathbb{N}$, set 
$$
\left\| h \right\|^2_{H^{m}_l (\mathbb{R}^d)} = \sum_{\left| \alpha \right| \leq m} \int_{\mathbb{R}^d} \left| \left< x \right>^l \partial_x^\alpha h(x) \right|^2 dx.
$$
\begin{prop}\label{lemma3.1}
For any $T > 0$. Let $u_0 \in \mathscr{S}(\mathbb{R}^2)$. Assume $g \in C^\infty \left( [0, T], \mathscr{S} (\mathbb{R}^2) \right)$ and $a$ satisfying \eqref{6-1}, \eqref{4-1}, then the Cauchy problem \eqref{1-2} admits an unique solution $u \in C^\infty \left( [0, T], \mathscr{S} (\mathbb{R}^2) \right)$. Moreover,
for any $t \in [0, T]$, we have
\begin{equation}\label{2-1}
\begin{split}
&\left \| u(t) \right\|^2_{L^2(\mathbb{R}^2)} + \frac{C_1}{2} \int_0^t \left\| \partial_y u(s) \right\|^2_{L^2(\mathbb{R}^2)} ds \\
&\leq T e^T \left\| u_0 \right\|^2_{L^2(\mathbb{R}^2)} + (Te^T+1)\|g\|^{2}_{L^{\infty}([0, T]; L^2(\mathbb{R}^2))}.
\end{split}
\end{equation}
\end{prop}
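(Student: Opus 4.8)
The plan is to derive the basic $L^2$ energy identity by testing the equation \eqref{1-2} against $u(t)$ and integrating over $\mathbb{R}^2$. The transport term $(y\partial_x u, u)_{L^2}$ vanishes after integration by parts since $y\partial_x$ is skew-adjoint (the coefficient $y$ does not depend on $x$). The diffusion term $-(\partial_y(a\partial_y u), u)_{L^2}$ becomes $(a\partial_y u, \partial_y u)_{L^2} \ge C_1\|\partial_y u\|^2_{L^2}$ by the lower bound in \eqref{6-1}. The source term $(g, u)_{L^2}$ is handled by Cauchy--Schwarz and Young's inequality, $|(g,u)_{L^2}| \le \frac12\|g\|^2_{L^2} + \frac12\|u\|^2_{L^2}$. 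This yields the differential inequality $\frac{d}{dt}\|u(t)\|^2_{L^2} + C_1\|\partial_y u(t)\|^2_{L^2} \le \|u(t)\|^2_{L^2} + \|g(t)\|^2_{L^2}$; keeping half of the good term on the left and applying Gr\"onwall's inequality over $[0,T]$ then produces the stated bound \eqref{2-1} (with the factor $Te^T$ arising from integrating the Gr\"onwall kernel).

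For the existence, uniqueness and $\mathscr{S}$-regularity part, I would proceed by a standard approximation/a priori-estimate scheme: first establish a sequence of weighted energy estimates controlling $\|u(t)\|_{H^m_l}$ in terms of the data, then pass to the limit in a Galerkin or vanishing-viscosity approximation. The key point is to commute the equation with $\partial_x^\alpha\partial_y^\beta$ and with the weight $\langle(x,y)\rangle^l$; the commutator $[\partial_x^\alpha\partial_y^\beta, y\partial_x]$ produces a lower-order term of the form $\alpha\,\partial_x^{\alpha-1}\partial_y^{\beta+1}$ (only one nonzero term, since $y\partial_x$ is first order and $\partial_x(y\partial_x) = y\partial_x\partial_x$), which is absorbable; the commutator with the weight generates terms with $\langle x\rangle^{l-1}$ that are dominated by the weighted norms already under control. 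Then $a\in C^\infty([0,T];\mathcal{A}(\mathbb{R}^2))$ together with the bounds \eqref{4-1} ensures all arising coefficients are smooth and bounded, so one obtains uniform-in-$m,l$ control and hence $u\in C^\infty([0,T];\mathscr{S}(\mathbb{R}^2))$. Uniqueness follows from the energy identity applied to the difference of two solutions with $g=0$, $u_0=0$.

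The main obstacle, I expect, is not any single hard estimate but rather organizing the weighted higher-order energy estimates so that the degeneracy of the equation (no diffusion in $x$) does not obstruct the bootstrap: one only gains regularity and decay in $y$ directly from the equation, and must recover $x$-regularity indirectly. Here the hypoelliptic structure (H\"ormander's condition, noted in the introduction) guarantees this is possible, but in the present context the cleanest route is to appeal to the commutator structure already visible in Lemma \ref{lemma2.1} --- the operators $H_\delta$ encode precisely the direction in which $x$-regularity is recovered with a time weight --- and to defer the sharp quantitative gain to the analytic estimates of Section 4, using in this proposition only the qualitative fact that the approximate solutions stay in $\mathscr{S}$ uniformly on $[0,T]$. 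Thus the proof of \eqref{2-1} itself is short (the four steps above), while the regularity claim is the longer but routine part, handled by the standard a priori estimate plus approximation argument.
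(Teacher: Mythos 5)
Your derivation of the energy estimate \eqref{2-1} is essentially the paper's: test the equation against $u$, kill the transport term by skew-adjointness of $y\partial_x$, use the lower bound \eqref{6-1} on the diffusion term, apply Young's inequality to the source, then Gr\"onwall followed by re-integration to recover the dissipation integral on the left. Where you genuinely diverge is the existence/regularity part. The paper does not use Galerkin or vanishing viscosity: it proves an a priori estimate for the \emph{adjoint} problem in every weighted space $H^m_l$ (for test functions vanishing at $t=T$), obtains a weak solution in $L^\infty([0,T];H^m_l)$ for every $m,l$ via Hahn--Banach and the Riesz representation theorem, and then bootstraps time regularity directly from the equation (expressing $\partial_t^{k+1}u$ through $\partial_t^k$ of the right-hand side) to conclude $u\in C^\infty([0,T];\mathscr S(\mathbb R^2))$. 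Your approximation route is also viable and more constructive, but it requires you to actually set up and justify the limit passage, which you leave unspecified; the duality argument sidesteps this at the price of carrying out the adjoint estimate in all the weighted norms (which is where the weight commutators you mention actually appear in the paper). One concrete slip to fix: the commutator is $[\partial_x^\alpha\partial_y^\beta,\,y\partial_x]=\beta\,\partial_x^{\alpha+1}\partial_y^{\beta-1}$, i.e.\ it carries the coefficient $\beta$ and trades a $y$-derivative for an \emph{extra} $x$-derivative (this is precisely the hypoelliptic mechanism transferring regularity from $y$ to $x$), not $\alpha\,\partial_x^{\alpha-1}\partial_y^{\beta+1}$ as you wrote; moreover it has the same total order $\alpha+\beta$, not lower order. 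Since it is still controlled by the full $H^m_l$ norm with $m=\alpha+\beta$, your absorption claim survives once the formula is corrected, but as stated it is wrong.
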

\begin{proof}
The proof of the existence of weak solution is similar to that in \cite{t-1}, \cite{t-2}. For any $m \in \mathbb{N}, l \ge 0$. Let $P = -\partial_t + \left( y\partial_x - \partial_y \left( a \partial_y \right) \right)^{*}$, here the adjoint operator $\left( \cdot \right)^{*}$ is taken respect to the scalar product in space $H^m_l(\mathbb{R}^2)$. For all $0 \leq t \leq T$ and $f \in C^\infty \left( [0, T], \mathscr{S}(\mathbb{R}^2) \right)$ with $\partial_{x, y}^\alpha f(T) = 0$ for all $\alpha=\left( \alpha_1, \alpha_2 \right) \in \mathbb{N}^2$, one has 
\begin{equation*}
\begin{aligned}
\left( f, Pf \right)_{H^m_l(\mathbb{R}^2)} = -\frac{1}{2} \frac{d}{dt} \left\| f(t) \right\|^2_{H^m_l(\mathbb{R}^2)} + \left( y\partial_x f, f \right)_{H^m_l(\mathbb{R}^2)}  - \left( \partial_y \left( a\partial_y f \right), f \right)_{H^m_l(\mathbb{R}^2)}. 
\end{aligned}
\end{equation*} 
By using the Leibniz formula, we have
\begin{equation*}
\begin{aligned}
&\left( y\partial_x f, f \right)_{H^m_l(\mathbb{R}^2)} = \sum_{\left| \alpha \right| \leq m} \left( \left< \left( x, y \right) \right>^l \partial_x^{\alpha_1} \partial_y^{\alpha_2} (y\partial_x f), \left< \left( x, y \right) \right>^l \partial_{x, y}^\alpha f \right)_{L^2(\mathbb{R}^2)} \\
&=\left( \left< \left( x, y \right) \right>^l y\partial_x(\partial_x^{m}  f), \left< \left( x, y \right) \right>^l \partial_{x}^m f \right)_{L^2(\mathbb{R}^2)}\\
&\quad+\sum_{\alpha_{2}\ne0, |\alpha|\le m}\sum_{\beta\le\alpha_{2}} \binom{\alpha_2}{\beta} \left( \left< \left( x, y \right) \right>^l \partial_x^{\alpha_1}  \left( \left(\partial^{\beta}_{y}y \right) \left(\partial_y^{\alpha_2-\beta}\partial_x f \right) \right), \left< \left( x, y \right) \right>^l \partial_{x, y}^\alpha f \right)_{L^2(\mathbb{R}^2)}.
\end{aligned}
\end{equation*}
Since $\partial^{\beta}_{y}y=0$ for all $\beta\ge2$, then
\begin{equation*}
\begin{aligned}
&\sum_{\alpha_{2}\ne0, |\alpha|\le m} \sum_{\beta\le\alpha_{2}} \binom{\alpha_2}{\beta} \left( \left< \left( x, y \right) \right>^l \partial_x^{\alpha_1}  \left( \left(\partial^{\beta}_{y}y \right) \left(\partial_y^{\alpha_2-\beta}\partial_x f \right) \right), \left< \left( x, y \right) \right>^l \partial_{x, y}^\alpha f \right)_{L^2(\mathbb{R}^2)}\\
&=\sum_{\alpha_{2}\ne0, |\alpha|\le m}\left( \left< \left( x, y \right) \right>^l   y\partial_x(\partial_{x, y}^\alpha f), \left< \left( x, y \right) \right>^l \partial_{x, y}^\alpha f \right)_{L^2(\mathbb{R}^2)}\\
&\quad+\alpha_{2}\sum_{\alpha_{2}\ne0, |\alpha|\le m}\left( \left< \left( x, y \right) \right>^l \partial_x^{\alpha_1+1} \partial_y^{\alpha_2-1}\partial_x f, \left< \left( x, y \right) \right>^l \partial_{x, y}^\alpha f \right)_{L^2(\mathbb{R}^2)}.
\end{aligned}
\end{equation*}
It follows that for all $|\alpha|\le m$
\begin{equation*}
\begin{aligned}
&\left( \left< \left( x, y \right) \right>^l   y\partial_x \left( \partial_{x, y}^\alpha f \right), \left< \left( x, y \right) \right>^l \partial_{x, y}^\alpha f \right)_{L^2(\mathbb{R}^2)}\\
&=\left( y\partial_x \left( \left< \left( x, y \right) \right>^l \partial_{x, y}^\alpha f \right), \left< \left( x, y \right) \right>^l \partial_{x, y}^\alpha f \right)_{L^2(\mathbb{R}^2)}\\
&\quad-\left( y \left( \partial_x\left< \left( x, y \right) \right>^l \right) \left( \partial_{x, y}^\alpha f \right), \left< \left( x, y \right) \right>^l \partial_{x, y}^\alpha f \right)_{L^2(\mathbb{R}^2)}.
\end{aligned}
\end{equation*}
Since   $f \in C^\infty \left( [0, T], \mathscr{S}(\mathbb{R}^2) \right)$, integration by parts,
$$
\left( y\partial_x \left( \left< \left( x, y \right) \right>^l \partial_{x, y}^\alpha f \right), \left< \left( x, y \right) \right>^l \partial_{x, y}^\alpha f \right)_{L^2(\mathbb{R}^2)}=0,
$$
then we can get that 
\begin{equation*}
\begin{aligned}
&\left( y\partial_x f, f \right)_{H^m_l(\mathbb{R}^2)} =-\sum_{|\alpha|\le m}\left( y \left( \partial_x\left< \left( x, y \right) \right>^l \right)  \left( \partial_{x, y}^\alpha f \right), \left< \left( x, y \right) \right>^l \partial_{x, y}^\alpha f \right)_{L^2(\mathbb{R}^2)}\\
&\qquad+\alpha_{2}\sum_{\alpha_{2}\ne0, |\alpha|\le m}\left( \left< \left( x, y \right) \right>^l y\partial_x^{\alpha_1+1} \partial_y^{\alpha_2-1}\partial_x f, \left< \left( x, y \right) \right>^l \partial_{x, y}^\alpha f \right)_{L^2(\mathbb{R}^2)}.
\end{aligned}
\end{equation*}
Note that, 
\begin{equation*}
\begin{aligned}
&|y\partial_x \left< \left( x, y \right) \right>^l| =| l \cdot xy \cdot \left< \left( x, y \right) \right>^{\frac{l}{2} - 1}| \leq l \cdot \frac{x^2 + y^2}{2} \cdot \left< \left( x, y \right) \right>^{\frac{l}{2} - 1} \leq l \cdot \left< \left( x, y \right) \right>^l.
\end{aligned}
\end{equation*}
Then from Cauchy-Schwartz inequality, there exists a constant $C_{3}>0$ such that 
\begin{equation*}
\begin{aligned}
&\left| \sum_{\left| \alpha \right| \leq m} \left( y\partial_x \left< \left( x, y \right) \right>^l \partial_{x, y}^\alpha f, \left< \left( x, y \right) \right>^l \partial_{x, y}^\alpha f \right)_{L^2(\mathbb{R}^2)} \right| \\
&\leq C_3 \sum_{\left| \alpha \right| \leq m} \left\| \left< \left( x, y \right) \right>^l \partial_{x, y}^\alpha f \right\|^2_{L^2(\mathbb{R}^2)} = C_3 \left\| f \right\|^2_{H^m_l(\mathbb{R}^2)},  
\end{aligned}
\end{equation*}
and 
\begin{equation*}
\begin{aligned}
&\alpha_2 \left| \sum_{\alpha_{2}\ne0, \left| \alpha \right| \leq m} \left(  \left< \left( x, y \right) \right>^l \partial_x^{\alpha_1 + 1} \partial_y^{\alpha_2 - 1} f, \left< \left( x, y \right) \right>^l \partial_{x, y}^\alpha f \right)_{L^2(\mathbb{R}^2)} \right| \\
&\leq \sum_{\alpha_{2}\ne0, \left| \alpha \right| \leq m} \alpha_2 \left\| \left< \left( x, y \right) \right>^l \partial_x^{\alpha_1 + 1} \partial_y^{\alpha_2 - 1} f \right\|_{L^2(\mathbb{R}^2)} \left\| \left< \left( x, y \right) \right>^l \partial_{x, y}^\alpha f \right\|_{L^2(\mathbb{R}^2)} \\
&\leq C_3 \left\| f \right\|^2_{H^m_l(\mathbb{R}^2)}.
\end{aligned}
\end{equation*}
Thus we can obtain 
$$
\left| \left( y\partial_x f, f \right)_{H^m_l(\mathbb{R}^2)} \right| \leq 2C_3 \left\| f(t) \right\|^2_{H^m_l(\mathbb{R}^2)}.
$$
By using Leibniz formula, we calculate that
\begin{equation*}
\begin{aligned}
&-\left( \partial_y \left( a\partial_y f \right), f \right)_{H^m_l(\mathbb{R}^2)} = -\sum_{\left| \alpha \right| \leq m} \left( \left< \left( x, y \right) \right>^l \partial_{x, y}^\alpha \partial_y \left( a \partial_y f \right), \left< \left( x, y \right) \right>^l \partial_{x, y}^\alpha f \right)_{L^2(\mathbb{R}^2)} \\
& \qquad = -\sum_{\left| \alpha \right| \leq m} \left( \partial_y \left( \left< \left( x, y \right) \right>^l \partial_{x, y}^\alpha \left( a\partial_y f \right) \right), \left< \left( x, y \right) \right>^l \partial_{x, y}^\alpha f \right)_{L^2(\mathbb{R}^2)} \\
& \qquad \quad + \sum_{\left| \alpha \right| \leq m} \left( \left( \partial_y \left< \left( x, y \right) \right>^l \right) \left( \partial_{x, y}^\alpha \left( a \partial_y f \right) \right), \left< \left( x, y \right) \right>^l \partial_{x, y}^\alpha f \right)_{L^2(\mathbb{R}^2)} \\
& \qquad= \sum_{\left| \alpha \right| \leq m} \left( a \left< \left( x, y \right) \right>^l \partial_{x, y}^\alpha \partial_y f,  \left< \left( x, y \right) \right>^l \partial_{x, y}^\alpha \partial_y f \right)_{L^2(\mathbb{R}^2)} \\
& \qquad \quad + \sum_{\left| \alpha \right| \leq m} \sum_{0 < \beta \leq \alpha} \binom{\alpha}{\beta} \left( \left< \left( x, y \right) \right>^l  \left( \partial_{x, y}^\beta a \right) \left( \partial_{x, y}^{\alpha - \beta} \partial_y f \right), \left< \left( x, y \right) \right>^l \partial_{x, y}^\alpha \partial_y f \right)_{L^2(\mathbb{R}^2)} \\
& \qquad \quad + 2\sum_{\left| \alpha \right| \leq m} \sum_{\beta \leq \alpha} \binom{\alpha}{\beta} \left( \left( \partial_y \left< \left( x, y \right) \right>^l \right) \left( \partial_{x, y}^\beta a \right) \left( \partial_{x, y}^{\alpha - \beta} \partial_y f \right), \left< \left( x, y \right) \right>^l \partial_{x, y}^\alpha f \right)_{L^2(\mathbb{R}^2)}
\end{aligned}
\end{equation*}
Since \eqref{6-1}, it follows that 
\begin{equation*}
\begin{aligned}
&\sum_{\left| \alpha \right| \leq m} \left( a\left< \left( x, y \right) \right>^l \partial_{x, y}^\alpha \partial_y f, \left< \left( x, y \right) \right>^l \partial_{x, y}^\alpha \partial_y f \right)_{L^2(\mathbb{R}^2)} \\
&\ge C_1 \sum_{\left| \alpha \right| \leq m} \left\| \left< \left( x, y \right) \right>^l \partial_{x, y}^\alpha \partial_y f \right\|^2_{L^2(\mathbb{R}^2_{x, y})} = C_1 \left\| \partial_y f \right\|^2_{H^m_l(\mathbb{R}^2)}.
\end{aligned}
\end{equation*}
By using Cauchy-Schwartz inequality, we have
\begin{equation*}
\begin{aligned}
&\left| \sum_{\left| \alpha \right| \leq m} \sum_{0 < \beta \leq \alpha} \binom{\alpha}{\beta} \left( \left< \left( x, y \right) \right>^l  \left( \partial_{x, y}^\beta a \right) \left( \partial_{x, y}^{\alpha - \beta} \partial_y f \right), \left< \left( x, y \right) \right>^l \partial_{x, y}^\alpha \partial_y f \right)_{L^2(\mathbb{R}^2)} \right| \\
&\leq \frac{C_1}{4} \left\| \partial_y f \right\|^2_{H^m_l(\mathbb{R}^2)} \\ 
&\quad+ \frac{1}{C_1} \left( \sum_{\left| \alpha \right| \leq m} \sum_{0 < \beta \leq \alpha} \binom{\alpha}{\beta} \left\| \partial_{x, y}^\beta a \right\|_{L^\infty(\mathbb{R}^2)} \left\| \left< \left( x, y \right) \right>^l \partial_{x, y}^{\alpha - \beta} \partial_y f \right\|_{L^2(\mathbb{R}^2)} \right)^2 \\
&\leq \frac{C_1}{4} \left\| \partial_y f \right\|^2_{H^m_l(\mathbb{R}^2_{x, y})} + \tilde{C}_1 \left\| f \right\|^2_{H^m_l(\mathbb{R}^2)}, 
\end{aligned}
\end{equation*}
and 
\begin{equation*}
\begin{aligned}
&2\left| \sum_{\left| \alpha \right| \leq m} \sum_{\beta \leq \alpha} \binom{\alpha}{\beta} \left( \left( \partial_y \left< \left( x, y \right) \right>^l \right) \left( \partial_{x, y}^\beta a \right) \left( \partial_{x, y}^{\alpha - \beta} \partial_y f \right), \left< \left( x, y \right) \right>^l \partial_{x, y}^\alpha f \right)_{L^2(\mathbb{R}^2)} \right| \\
&\leq C_4 \left\| \partial_y f \right\|_{H^m_l(\mathbb{R}^2)} \left\| f \right\|_{H^m_l(\mathbb{R}^2)} \leq \frac{C_1}{4} \left\| \partial_y f \right\|^2_{H^m_l(\mathbb{R}^2)} + \frac{\left( C_4 \right)^2}{C_1} \left\| f \right\|^2_{H^m_l(\mathbb{R}^2)}.
\end{aligned}
\end{equation*}
Therefore we can get that 
\begin{equation*}
\begin{aligned}
&-\frac{1}{2} \frac{d}{dt} \left\| f(t) \right\|^2_{H^m_l(\mathbb{R}^2)} + \frac{C_1}{2} \left\| \partial_y f(t) \right\|^2_{H^m_l(\mathbb{R}^2)}  \\
&\qquad- \left( 2C_3 + \tilde{C}_1 + \frac{\left( C_4 \right)^2}{C_1} \right) \left\| f(t) \right\|^2_{H^m_l(\mathbb{R}^2)}\leq \left\| f(t) \right\|_{H^m_l(\mathbb{R}^2)} \left\| Pf(t) \right\|_{H^m_l(\mathbb{R}^2)},
\end{aligned}
\end{equation*}
which implies that 
\begin{equation*}
\begin{aligned}
&-\frac{d}{dt} \left(e^{c_0 t} \left\| f(t) \right\|^2_{H^m_l(\mathbb{R}^2)} \right) + C_1 e^{c_0 t} \left\| \partial_y f(t) \right\|^2_{H^m_l(\mathbb{R}^2)} \\
&\leq 2e^{c_0 t} \left\| f(t) \right\|_{H^m_l(\mathbb{R}^2)} \left\| Pf(t) \right\|_{H^m_l(\mathbb{R}^2)},
\end{aligned}
\end{equation*}
with $c_0 = 4C_3 + 2\tilde{C}_1 + \frac{2\left( C_4 \right)^2}{C_{1}}$.
Since $\partial_{x, y}^\alpha f(T) = 0$ for all $\alpha \in \mathbb{N}^2$, we have
\begin{equation*}
\begin{aligned}
&\left\| f(t) \right\|^2_{H^m_l(\mathbb{R}^2)} + \int_t^T e^{c_0 (s - t)} \left\| \partial_y f(s) \right\|^2_{H^m_l(\mathbb{R}^2)} ds \\
&\leq 2\int_t^T e^{c_0 (s - t)} \left\| f(s) \right\|_{H^m_l(\mathbb{R}^2)} \left\| Pf(s) \right\|_{H^m_l(\mathbb{R}^2)} ds \\
&\leq 2e^{c_0 T} \left\| f \right\|_{L^\infty \left( [0, T], H^m_l(\mathbb{R}^2) \right)} \left\| Pf \right\|_{L^1 \left( [0, T], H^m_l(\mathbb{R}^2) \right)}, 
\end{aligned}
\end{equation*}
which leading to
\begin{equation}\label{5-10}
\begin{aligned}
\left\| f \right\|_{L^\infty \left( [0, T], H^m_l(\mathbb{R}^2) \right)} \leq 2e^{c_0 T} \left\| Pf \right\|_{L^1 \left( [0, T], H^m_l(\mathbb{R}^2) \right)}.
\end{aligned}
\end{equation}
For all $m \in \mathbb{N}, l \ge 0$, consider the subspace
$$
\Omega = \left\{ Pf: f \in C^\infty \left( [0, T], \mathscr{S}(\mathbb{R}^2) \right), \partial_{x, y}^\alpha f(T) = 0, \forall \alpha \in \mathbb{N}^2 \right\} \subset L^1 \left( [0, T], H^m_l(\mathbb{R}^2) \right).
$$
Define the linear functional
\begin{equation*}
\begin{aligned}
&\mathcal{G}: \Omega \rightarrow \mathbb{R} \\
&\omega=Pf \rightarrow \left( u_0, f(0) \right)_{H^m_l(\mathbb{R}^2)} + \int_0^T \left( g(t), f(t) \right)_{H^m_l(\mathbb{R}^2)} dt.
\end{aligned}
\end{equation*}
Let $Pf_1 = Pf_2$ with $f_1, f_2 \in C^\infty \left( \left[ 0, T \right], \mathscr{S}(\mathbb{R}^2) \right)$ and $f_1 (T) = f_2 (T) = 0$, then from \eqref{5-10}, it follows that
$$
\left\| f_1 - f_2 \right\|_{L^\infty \left( \left[0, T \right], H^m_l(\mathbb{R}^2) \right)} \leq 2e^{c_0 T} \left\| Pf_1 - Pf_2 \right\|_{L^1 \left( [0, T], H^m_l(\mathbb{R}^2) \right)} = 0,
$$
hence $f_1 = f_2$, which leading to the fact that the operator $P$ is injective. Therefore the linear functional $\mathcal{G}$ is well-defined. Moreover, we can obtain 
\begin{equation*}
\begin{aligned}
\left| \mathcal{G}\left( Pf \right) \right| 
&\leq \left( \left\| u_0 \right\|_{H^m_l(\mathbb{R}^2)} + \int_0^T \left\| g(t) \right\|_{H^m_l(\mathbb{R}^2)} dt \right) \cdot \left\| f \right\|_{L^\infty \left( [0, T], H^m_l(\mathbb{R}^2) \right)} \\
&\leq 2e^{c_0 T} \left( \left\| u_0 \right\|_{H^m_l(\mathbb{R}^2)} + \int_0^T \left\| g(t) \right\|_{H^m_l(\mathbb{R}^2)} dt \right) \cdot \left\| Pf \right\|_{L^1 \left( [0, T], H^m_l(\mathbb{R}^2) \right)}.
\end{aligned}
\end{equation*}
Hence $\mathcal{G}$ is a continuous linear functional on $\left( \Omega, \left\| \cdot \right\|_{L^1 \left( [0, T], H^m_l(\mathbb{R}^2) \right)} \right)$. By using the Hahn-Banach theorem, $\mathcal{G}$ can be extended to a continuous linear functional on $L^1 \left( [0, T], H^m_l(\mathbb{R}^2) \right)$ with norm can be bounded by 
$$2e^{c_0 T} \left( \left\| u_0 \right\|_{H^m_l(\mathbb{R}^2)} + \int_0^T \left\| g(t) \right\|_{H^m_l(\mathbb{R}^2)} dt \right).$$
It follows the Riesz representation theorem that there exists a unique function $u \in L^\infty \left( [0, T], H^m_l(\mathbb{R}^2) \right)$ satisfying 
$$
\mathcal{G}(\omega) = \int_0^T \left( u, \omega \right)_{H^m_l(\mathbb{R}^2)} dt, \forall \omega \in \Omega, 
$$
and
$$
\left\| u \right\|_{L^\infty \left( [0, T], H^m_l(\mathbb{R}^2) \right)} \leq 2e^{c_0 T} \left( \left\| u_0 \right\|_{H^m_l(\mathbb{R}^2)} + \left\| g \right\|_{L^1 \left( [0, T], H^m_l(\mathbb{R}^2) \right)} \right),
$$
these imply that for all $f \in C^\infty \left( [0, T], \mathscr{S}(\mathbb{R}^2) \right)$, 
\begin{equation*}
\begin{aligned}
\mathcal{G} \left( Pf \right) &= \int_0^T \left( u, Pf \right)_{H^m_l(\mathbb{R}^2)} dt = \left( u_0, f(0) \right)_{H^m_l(\mathbb{R}^2)} + \int_0^T \left( g(t), f(t) \right)_{H^m_l(\mathbb{R}^2)} dt.
\end{aligned}
\end{equation*}
Therefore, $\forall m \in \mathbb{N}, l \ge 0$, $u \in L^\infty \left( [0, T], H^m_l(\mathbb{R}^2) \right)$ is a unique weak solution of the Cauchy problem \eqref{1-2}. Next, we would show by induction that for all integers $k$,
\begin{equation}\label{5-4}
\frac{d^k}{dt^k} u(t) \in L^\infty ([0, T], \mathscr{S}(\mathbb{R}^2)).
\end{equation}
For $k = 0$, we have already proved. Assume \eqref{5-4} holds for k, then we try to prove
$$
\frac{d^{k + 1}}{dt^{k + 1}} u(t) \in L^\infty ([0, T], \mathscr{S}(\mathbb{R}^2)).
$$
Since u is the solution of the Cauchy problem \eqref{1-2}, we have
$$
\frac{d^{k + 1}}{dt^{k + 1}} u(t)  = \partial_t^k g - y \partial_x \partial_t^k u + \partial_t^{k} \partial_y \left( a \partial_y u \right).
$$
By using the induction hypothesis, it follows that
$$
y \partial_x \partial_t^k u,\  \partial_t^{k} \partial_y \left( a \partial_y u \right) \in L^\infty ([0, T], \mathscr{S}(\mathbb{R}^2)), 
$$
since $g \in C^\infty([0, T], \mathscr{S}(\mathbb{R}^2_{x, y}))$, we have $\partial_t^k g \in L^\infty ([0, T], \mathscr{S}(\mathbb{R}^2))$. So that 
$$
\frac{d^{k + 1}}{dt^{k + 1}} u(t) \in L^\infty ([0, T], \mathscr{S}(\mathbb{R}^2)).
$$
Hence, we can obtain that
$$
u \in C^\infty \left( [0, T], \mathscr{S}(\mathbb{R}^2) \right).
$$

It remains to consider \eqref{2-1}. Since $u \in C^\infty([0, T], \mathscr{S}(\mathbb{R}^2))$ is the solution of the Cauchy problem \eqref{1-2}, it follows that 
$$
\frac{1}{2} \frac{d}{dt} \left\| u(t) \right\|^2_{L^2(\mathbb{R}^2)}+(y\partial_{x}u, u)_{L^2(\mathbb{R}^2)}-(\partial_{y}(a \partial_y u), u)_{L^2(\mathbb{R}^2)}= \left( g, u \right)_{L^2(\mathbb{R}^2)}.
$$
Integration by parts, one has
$$(y\partial_{x}u, u)_{L^2(\mathbb{R}^2)}=0,$$
since $a(t, x, y)\geq C_{1}$ for all $x, y\in\mathbb R$ and $t\ge0$, one has
$$-(\partial_{y}(a\partial_{y}u), u)_{L^2(\mathbb{R}^2)}=(a\partial_{y}u, \partial_{y}u)_{L^2(\mathbb{R}^2)}\geq C_{1}\|\partial_{y}u\|^{2}_{L^2(\mathbb{R}^2)}.$$
Thus by the Cauchy-Schwartz inequality, we obtain
\begin{equation*}
\frac{1}{2} \frac{d}{dt} \left\| u(t) \right\|^2_{L^2(\mathbb{R}^2)} + C_1 \left\| \partial_y u(t) \right\|^2_{L^2(\mathbb{R}^2)} \leq \frac{1}{2}\|g(t)\|^{2}_{L^2(\mathbb{R}^2)}+ \frac{1}{2} \left\| u(t) \right\|^2_{L^2(\mathbb{R}^2)}.
\end{equation*}
By using Gronwall's inequality, one has
$$
\left\| u(t) \right\|^2_{L^2(\mathbb{R}^2)} \leq e^T \left( \left\| u_0 \right\|^2_{L^2(\mathbb{R}^2)} + T\|g\|^{2}_{L^{\infty}([0, T]; L^2(\mathbb{R}^2))}\right),
$$
and therefore, we have
\begin{equation*}
\begin{split}
&\frac{1}{2} \frac{d}{dt} \left\| u(t) \right\|^2_{L^2(\mathbb{R}^2)} + C_1 \left\| \partial_y u(t) \right\|^2_{L^2(\mathbb{R}^2)}\\
&\leq \frac{1}{2}\|g(t)\|^{2}_{L^2(\mathbb{R}^2)}+ \frac{1}{2} e^T \left\| u_0 \right\|^2_{L^2(\mathbb{R}^2)} + \frac{1}{2} Te^T \|g\|^{2}_{L^{\infty}([0, T]; L^2(\mathbb{R}^2))}.
\end{split}
\end{equation*}
For all $0\leq t \leq T$, integrate the above inequality from~0~to $t$, 
\begin{equation*}
\begin{split}
&\left\| u(t) \right\|^2_{L^2(\mathbb{R}^2)} + \frac{C_1}{2} \int_0^t \left\| \partial_y u(s) \right\|^2_{L^2(\mathbb{R}^2)} ds\\
&\leq \left( 1 + T e^T \right) \left\| u_0 \right\|^2_{L^2(\mathbb{R}^2)} + (T^2e^T + T)\|g\|^{2}_{L^{\infty}([0, T]; L^2(\mathbb{R}^2))}.
\end{split}
\end{equation*}
This energy estimate imply also the unicity of solutions for $L^2$ initial datum. 
\end{proof}

\section{Proof of theorem \ref{Theorem}}
In this section, we show the analytic regularity of the solution. We first construct the following estimation for auxiliary vector fields $H=H_\delta$.

\begin{prop}\label{Proposition 4.1.}
For any $k \in \mathbb{N}$. Let $u$ be the solution of the Cauchy problem \eqref{1-2}, then there exists $A > 0$ which independent of $k$, such that for any $t \in \left] 0, T \right]$, 
\begin{equation}\label{2-2}
\left\| H^k u(t) \right\|^2_{L^2(\mathbb{R}^2)} + \frac{C_1}{2} \int_0^t \left\| \partial_y H^k u(s) \right\|^2_{L^2(\mathbb{R}^2)} ds \leq \left( A^{k + 1} k! \right)^2,
\end{equation}
where $A$ depends on $T$ and $\delta$.
\end{prop}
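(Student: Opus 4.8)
The plan is to argue by induction on $k$, using Lemmas \ref{lemma2.1}--\ref{lemma2.4} and the base energy estimate of Proposition \ref{lemma3.1}. For $k=0$, \eqref{2-2} is exactly \eqref{2-1} (with $\|g(t)\|_{L^2}\le B$ from \eqref{4-1}), so it will suffice to choose $A$ with $A^2\ge Te^T\|u_0\|_{L^2}^2+(Te^T+1)B^2$; moreover, since $u_0\in\mathscr S(\mathbb R^2)$ we have $u\in C^\infty([0,T],\mathscr S(\mathbb R^2))$ by Proposition \ref{lemma3.1}, which makes all the manipulations below legitimate. Assuming $k\ge1$ and \eqref{2-2} for all indices $<k$, I would apply $H^k$ to \eqref{1-2}; using that $H$ commutes with $\partial_y$ and the commutator identity \eqref{1-5}, this gives
\begin{equation*}
\partial_t(H^ku)+y\partial_x(H^ku)-\partial_y H^k(a\partial_y u)=H^kg+k\delta t^{\delta-1}\partial_y H^{k-1}u.
\end{equation*}
Pairing with $H^ku$ in $L^2(\mathbb R^2)$, the transport term vanishes since $(y\partial_x v,v)_{L^2}=0$, and Lemma \ref{lemma2.2} (for $k=1$ one argues directly from \eqref{2-18}) bounds the diffusion term from below. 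Writing $M=8C(T+1)^{\delta+1}$, $X_j(t)=\|\partial_y H^ju(t)\|_{L^2}$, $Y_k(t)=\|H^ku(t)\|_{L^2}$ and $c_j=\binom{k}{j}M^{j+1}(j-2)!$, one then gets
\begin{equation*}
\tfrac{1}{2}\tfrac{d}{dt}Y_k^2+C_1X_k^2\le\|H^kg\|_{L^2}Y_k+k\delta t^{\delta-1}X_{k-1}Y_k+X_k\sum_{j=1}^kc_jX_{k-j}.
\end{equation*}

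The next step is to bound the three terms on the right by Young's inequality, with attention to \emph{where} the growing factor $k$ is placed. I would use $\|H^kg\|_{L^2}Y_k\le\tfrac{1}{2}\|H^kg\|_{L^2}^2+\tfrac{1}{2}Y_k^2$, with $\|H^kg\|_{L^2}$ controlled by Lemma \ref{lemma2.4}; $k\delta t^{\delta-1}X_{k-1}Y_k\le\tfrac{1}{2}(k\delta t^{\delta-1})^2X_{k-1}^2+\tfrac{1}{2}Y_k^2$, so that the $k^2$ is thrown onto $X_{k-1}^2$ (it will be integrated and controlled by the induction hypothesis, with the identity $k^2((k-1)!)^2=(k!)^2$ absorbing it), while the coefficient of $Y_k^2$ stays bounded uniformly in $k$; and $X_k\sum_jc_jX_{k-j}\le\tfrac{C_1}{2}X_k^2+\tfrac{1}{2C_1}\big(\sum_jc_jX_{k-j}\big)^2$, absorbing part of $C_1X_k^2$ into the left-hand side. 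Integrating over $[0,t]$ (note $H^ku(0)=0$ for $k\ge1$, as $H^k$ carries a factor $t^{\delta k}$) and applying Gronwall with the now $k$-independent coefficient, I would arrive at
\begin{equation*}
Y_k(t)^2+C_1\int_0^tX_k^2\,ds\le e^{2T}\int_0^T\Big(\|H^kg\|_{L^2}^2+(k\delta s^{\delta-1})^2X_{k-1}^2+\tfrac{1}{C_1}\big(\sum_{j}c_jX_{k-j}\big)^2\Big)ds.
\end{equation*}

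It then remains to bound this right-hand side by $(A^{k+1}k!)^2$. The $g$-term contributes $\lesssim T\,4^{k+1}(T+1)^{2(k+1)(\delta+1)}B^{2k+2}(k!)^2$ by Lemma \ref{lemma2.4}; the $X_{k-1}^2$-term contributes $\lesssim\frac{\delta^2(T+1)^{2\delta-2}}{C_1}A^{2k}(k!)^2$ using $s^{2\delta-2}\le(T+1)^{2\delta-2}$ (here $\delta\ge1$) and the induction hypothesis. For the commutator sum, rather than the lossy split $(\sum c_jb_j)^2\le(\sum c_j)(\sum c_jb_j^2)$, I would keep the time integral and use Minkowski's inequality $\big\|\sum_jc_jX_{k-j}\big\|_{L^2(0,T)}\le\sum_jc_j\|X_{k-j}\|_{L^2(0,T)}\le\sqrt{2/C_1}\sum_jc_jA^{k-j+1}(k-j)!$, and then the elementary identity $\binom{k}{j}(j-2)!(k-j)!=\frac{k!}{j(j-1)}$ for $j\ge2$ (and $\binom{k}{1}(-1)!(k-1)!=k!$), which gives $\sum_jc_jA^{k-j+1}(k-j)!=k!A^{k+1}\big(\frac{M^2}{A}+\sum_{j\ge2}\frac{M^{j+1}}{j(j-1)A^j}\big)\le2M^2A^kk!$ once $A\ge2M$; hence the commutator term contributes $\lesssim\frac{M^4}{C_1^2}A^{2k}(k!)^2$. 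Collecting these, the right-hand side is at most $(k!)^2A^{2k}$ times $e^{2T}\big(4T(T+1)^{2(\delta+1)}B^2\,(4(T+1)^{2(\delta+1)}B^2/A^2)^k+\frac{2\delta^2(T+1)^{2\delta-2}}{C_1}+\frac{8M^4}{C_1^2}\big)$, so taking $A$ large in terms of $C_1,T,B,C,\delta$ and $\|u_0\|_{L^2}$ — in particular $A\ge2M$ and $A^2\ge4(T+1)^{2(\delta+1)}B^2$ — makes it $\le A^{2k+2}(k!)^2$, closing the induction. (Thus $A$ in fact depends on more than $T,\delta$ alone.)

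The hard part is exactly this final bookkeeping. The right-hand side is a convolution of the target sequence $(A^{j+1}j!)_j$ with the coefficients $c_j=\binom{k}{j}M^{j+1}(j-2)!$; a crude Cauchy--Schwarz would cost a factor $\sum_jc_j\sim M^{k+1}k!$ and destroy the estimate, which forces the Minkowski route above. Equally delicate is keeping the Gronwall factor uniform in $k$: the $k$ coming out of the commutator \eqref{1-5} must be routed to the lower-order term $X_{k-1}^2$ and not into the exponent, otherwise one picks up a fatal $e^{ckt^\delta}$. The identity $\binom{k}{j}(j-2)!(k-j)!=k!/(j(j-1))$ is the algebraic fact that turns the convolution into a geometric series in $M/A$ which is summable uniformly in $k$.
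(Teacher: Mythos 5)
Your induction is essentially the paper's own argument: the same commutator identity \eqref{1-5}, the same use of Lemma \ref{lemma2.2}, the same routing of the factor $k$ onto $\|\partial_y H^{k-1}u\|^2$ so that Gronwall's constant stays uniform in $k$, the vanishing of $H^ku$ at $t=0$, and --- for the commutator sum --- exactly the Minkowski-in-time step followed by the identity $\binom{k}{j}(j-2)!(k-j)!=k!/(j(j-1))$ that the paper uses in its term $R_3$. Your closing remark that $A$ must depend on more than $T,\delta$ is also correct (the paper's $A$ absorbs $A_0$, $B$, $C$, $C_1$).

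There is, however, one genuine gap: you write ``since $u_0\in\mathscr S(\mathbb R^2)$ we have $u\in C^\infty([0,T],\mathscr S(\mathbb R^2))$,'' but the Proposition must hold for the solution with $u_0\in L^2(\mathbb R^2)$ only --- that is the whole point of the smoothing theorem. For such data the solution is a priori only in $L^\infty([0,T];L^2)$, so none of your manipulations (applying $H^k$ to the equation, pairing with $H^ku$, integrating by parts, evaluating $H^ku$ at $t=0$) is justified, and you cannot assume the regularity you are trying to prove. The paper closes this gap by mollifying: it sets $u_{0,\epsilon}=(u_0*\varphi_\epsilon)e^{-\epsilon(x^2+y^2)}\in\mathscr S(\mathbb R^2)$, notes $\|u_{0,\epsilon}\|_{L^2}\le\|u_0\|_{L^2}$ so that the base-case constant $A_0$ is independent of $\epsilon$, runs your induction on the smooth solutions $u_\epsilon$ of \eqref{1-2A}, and only then passes to the limit $\epsilon\to0$ using compactness and the uniqueness from Proposition \ref{lemma3.1}. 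You need to add this approximation layer (or some equivalent a priori justification) for the proof to apply to the solution actually considered in Theorem \ref{Theorem}.
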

\begin{proof}
Fixing $\varphi\ge0$,  a smooth function of compact support, defined in $\mathbb R_{x}\times\mathbb R_{y}$, with the properties that for $0<\epsilon<1$, $\varphi_{\epsilon}(x, y)=\epsilon^{-2}\varphi(\frac{x}{\epsilon}, \frac{y}{\epsilon})$ and $\int_{\mathbb{R}^2_{x, y}} \varphi_{\epsilon}(x, y) dxdy = 1$. And we set $u_{0, \epsilon}=(u_{0}*\varphi_{\epsilon})e^{-\epsilon(x^{2}+y^{2})}$. Since $u_{0}\in L^{2}(\mathbb R^{2})$, from the Leibniz formula, one has for all $ \alpha\in\mathbb N^{2}$ and $l\ge0$
$$\langle (x, y)\rangle^{l}\partial^{\alpha}_{x, y}u_{0, \epsilon}=\sum_{\beta\le\alpha}\binom{\alpha}{\beta} \langle (x, y)\rangle^{l}\left(u_{0}*\partial^{\beta}_{x, y}\varphi_{\epsilon}\right)\partial^{\alpha-\beta}_{x, y}e^{-\epsilon(x^{2}+y^{2})},$$
then from Minkowski inequality and Young inequality, we can obtain that for all $m \in \mathbb N$ and $l\ge0$
\begin{equation*}\label{u-delta}
\begin{split}
     \left\|u_{0, \epsilon}\right\|^{2}_{H^{m}_{l}(\mathbb R^{2})}
     &\le\sum_{|\alpha|\le m}\bigg(\sum_{\beta\le\alpha} \binom{\alpha}{\beta} \left\|\langle (x, y)\rangle^{l}\partial^{\alpha-\beta}_{x, y}e^{-\epsilon(x^{2}+y^{2})}\right\|_{L^{\infty}(\mathbb R^{2})}\\
     &\qquad\times\|\partial^{\beta}_{x, y}\varphi_{\epsilon}\|_{L^{1}(\mathbb R^{2})}\|u_{0}\|_{L^{2}(\mathbb R^{2})}\bigg)^{2}\le C_{\epsilon, m, l}\|u_{0}\|^{2}_{L^{2}(\mathbb R^{2})},
\end{split}
\end{equation*}
so that $u_{0, \epsilon}\in \mathscr{S}(\mathbb{R}^2)$. Also by using Minkowski inequality, we have 
\begin{equation}\label{u-epsilon}
\begin{split}
    \left\|u_{0, \epsilon}\right\|_{L^{2}(\mathbb R^{2})}&\le\left(\int_{\mathbb R^{2}}\left|\int_{\mathbb R^{2}}u_{0}(x-x', y-y')\varphi_{\epsilon}(x ', y ')dx'dy'\right|^{2}dxdy\right)^{\frac12}\\
    &\le\int_{\mathbb R^{2}}\left(\int_{\mathbb R^{2}}\left|u_{0}(x-x', y-y')\right|^{2}dxdy\right)^{\frac12}\varphi_{\epsilon}(x ', y ')dx ' d y '\\
    &=\left\|u_{0}\right\|_{L^{2}(\mathbb R^{2})}\int_{\mathbb R^{2}}\varphi_{\epsilon}(x ', y ')dx ' d y '=\left\|u_{0}\right\|_{L^{2}(\mathbb R^{2})}.
\end{split}
\end{equation}
Using Proposition \ref{lemma3.1}, the following Cauchy problem, for any $0<\epsilon<1$, 
\begin{equation}\label{1-2A}
\begin{cases}
\partial_t u_{\epsilon}(t, x, y) + y \partial_x u_{\epsilon}(t, x, y) - \partial_y \big( a \partial_y u_{\epsilon}(t, x, y) \big) = g(t, x, y), \\
u_{\epsilon} |_{t = 0} = u_{0, \epsilon} (x, y),
\end{cases}
\end{equation}
admits a unique solution $u_{\epsilon}\in C^{\infty}([0, T]; \mathscr{S}(\mathbb{R}^2))$, and from \eqref{2-1} and \eqref{u-epsilon}, one has for all $0\le t\le T$
\begin{equation*}
\begin{split}
&\left \| u_{\epsilon}(t) \right\|^2_{L^2(\mathbb{R}^2)} + \frac{C_1}{2} \int_0^t \left\| \partial_y u_{\epsilon}(s) \right\|^2_{L^2(\mathbb{R}^2)} ds\le(A_{0})^{2},
\end{split}
\end{equation*}
here
$$A_{0}=\sqrt{\left( 1 + T e^T \right) \left\| u_0 \right\|^2_{L^2(\mathbb{R}^2)} + (T^2e^T + T)\|g\|^{2}_{L^{\infty}([0, T]; L^2(\mathbb{R}^2))}},$$
which is independent of $\epsilon>0$. Now, we would prove by induction on the index $k$ that
$$
\left\| H^k u_{\epsilon}(t) \right\|^2_{L^2(\mathbb{R}^2)} + \frac{C_1}{2} \int_0^t \left\| \partial_y H^k u_{\epsilon}(s) \right\|^2_{L^2(\mathbb{R}^2)} ds \leq \left( A^{k + 1} k! \right)^2
$$
with $A>0$ independent of $ \epsilon$ and $k$.  
For $k=0$, it has been proved. Assume that $k\ge1$ and for all $0\le m\le k-1$ 
\begin{equation}\label{induction hypothesis}
     \left\| H^m u_{\epsilon}(t) \right\|^2_{L^2(\mathbb{R}^2)} + \frac{C_1}{2} \int_0^t \left\| \partial_y H^m u_{\epsilon}(t) \right\|^2_{L^2(\mathbb{R}^2)} ds \leq \left( A^{m + 1} m! \right)^2, \ \forall t\in]0, T].
\end{equation}

Now, we would to show that \eqref{induction hypothesis} holds true for $m=k$. Since $u_{\epsilon} \in C^{\infty}(]0, T]; \mathscr{S}(\mathbb{R}^2))$ is the solution of Cauchy problem \eqref{1-2A}, we have 
\begin{equation}\label{2-3}
\begin{split}
&\frac{1}{2} \frac{d}{dt} \left\| H^k u_{\epsilon}(t) \right\|^2_{L^2(\mathbb{R}^2)} - \left( H^k \partial_y \left( a \partial_y u_{\epsilon} \right), H^k u_{\epsilon} \right)_{L^2(\mathbb{R}^2)} \\
&= \left( H^k g, H^k u_{\epsilon} \right)_{L^2(\mathbb{R}^2)} + \left( \left[ \partial_t + y \partial_x, H^k \right] u_{\epsilon}, H^k u_{\epsilon} \right)_{L^2(\mathbb{R}^2)}.
\end{split}
\end{equation}
Then from Lemma \ref{lemma2.1} and Lemma \ref{lemma2.2}, it follows that 
\begin{equation}\label{2-3}
\begin{split}
&\frac{1}{2} \frac{d}{dt} \left\| H^k u_{\epsilon}(t) \right\|^2_{L^2(\mathbb{R}^2)}+C_{1}\|\partial_{y}H^{k}u_{\epsilon}\|^{2}_{L^{2}(\mathbb R^{2})}\\
&\le\left|\left( H^k g, H^k u_{\epsilon} \right)_{L^2(\mathbb{R}^2)}\right| + k\left|\left( \delta t^{\delta - 1} \partial_y H^{k - 1}u_{\epsilon}, H^k u_{\epsilon} \right)_{L^2(\mathbb{R}^2)}\right|\\
&\quad+\sum_{j=1}^{k}\binom{k}{j}\left( 8C \left( T + 1 \right)^{\delta + 1} \right)^{j + 1} \left( j - 2 \right)!\|\partial_{y}H^{k-j}u_{\epsilon}\|_{L^{2}(\mathbb R^{2})}\|\partial_{y}H^{k}u_{\epsilon}\|_{L^{2}(\mathbb R^{2})}\\
&=I_{1}+I_{2}+I_{3}.
\end{split}
\end{equation}
For the terms $I_1$ and $I_{2}$, applying the Cauchy-Schwarz inequality, we have
\begin{equation*}
\begin{aligned}
I_1 
&\leq\left\| H^k g(t) \right\|_{L^2(\mathbb{R}^2)} \left\| H^k u_{\epsilon}(t)  \right\|_{L^2(\mathbb{R}^2)} \leq 2 \left\| H^k g(t)  \right\|^2_{L^2(\mathbb{R}^2)} + \frac{1}{8} \left\| H^k u_{\epsilon} (t) \right\|^2_{L^2(\mathbb{R}^2)},
\end{aligned}
\end{equation*}
and
\begin{equation*}
\begin{aligned}
I_2 
&\le 2\left( k\delta t^{\delta - 1} \right)^2 \left\| \partial_y H^{k - 1} u_{\epsilon}(t) \right\|^2_{L^2(\mathbb{R}^2)} + \frac{1}{8} \left\| H^k u_{\epsilon}(t) \right\|^2_{L^2(\mathbb{R}^2)}.
\end{aligned}
\end{equation*}
For the term $I_3$, by using the Cauchy-Schwarz inequality, it follows that 
\begin{equation*}
\begin{aligned}
I_3 &\leq \frac{2}{C_1} \left( \sum_{j = 1}^k \binom{k}{j} \left( 8C \left( T + 1 \right)^{\delta + 1} \right)^{j + 1} \left( j - 2 \right)! \left\| \partial_y H^{k - j} u_{\epsilon}(t) \right\|_{L^2(\mathbb{R}^2)} \right)^2\\
&\qquad\quad + \frac{C_1}{8} \left\| \partial_y H^k u_{\epsilon}(t) \right\|^2_{L^2(\mathbb{R}^2)}.
\end{aligned}
\end{equation*}
Combining $I_{1}-I_{3}$ shows that
\begin{equation*}
\begin{aligned}
&\frac{d}{dt} \left\| H^k u_{\epsilon}(t) \right\|^2_{L^2(\mathbb{R}^2)} + \frac74C_1 \left\| \partial_y H^k u_{\epsilon}(t) \right\|^2_{L^2(\mathbb{R}^2)} \\
&\leq 4\left\| H^k g(t) \right\|^2_{L^2(\mathbb{R}^2)} + 4 \left( k\delta t^{\delta - 1} \right)^2 \left\| \partial_y H^{k - 1} u_{\epsilon}(t) \right\|^2_{L^2(\mathbb{R}^2)} + \frac{1}{2} \left\| H^k u_{\epsilon}(t) \right\|^2_{L^2(\mathbb{R}^2)}\\
&\quad + \frac{4}{C_1} \left( \sum_{j = 1}^k \binom{k}{j} \left( 8C \left( T + 1 \right)^{\delta + 1} \right)^{j + 1} \left( j - 2 \right)! \left\| \partial_y H^{k - j} u_{\epsilon}(t) \right\|_{L^2(\mathbb{R}^2)} \right)^2.
\end{aligned}
\end{equation*}
Thus for any $t \in [0, T]$, integrate from~0~to $t$, we can obtain that
\begin{equation}\label{2-5}
\begin{aligned}
&\left\| H^k u_{\epsilon}(t) \right\|^2_{L^2(\mathbb{R}^2)} + \frac{7}{4} C_1 \int_0^t \left\| \partial_y H^k u_{\epsilon}(s) \right\|^2_{L^2(\mathbb{R}^2)} ds \\
&\leq 4\int_0^t \left\| H^k g(s) \right\|^2_{L^2(\mathbb{R}^2)} ds + 4 \int_0^t \left( k\delta s^{\delta - 1} \right)^2 \left\| \partial_y H^{k - 1} u_{\epsilon}(s) \right\|^2_{L^2(\mathbb{R}^2)} ds \\
& \quad + \frac{4}{C_1} \int_0^t \left( \sum_{j = 1}^k \binom{k}{j} \left( 8C \left( T + 1 \right)^{\delta + 1} \right)^{j + 1} \left( j - 2 \right)! \left\| \partial_y H^{k - j} u_{\epsilon}(s) \right\|_{L^2(\mathbb{R}^2)} \right)^2 ds \\
& \quad + \left\| H^k u_{\epsilon}(t) \right\|^2_{L^2(\mathbb{R}^2)} \bigg{|}_{t = 0} + \frac{1}{2} \int_0^t \left\| H^k u_{\epsilon}(s) \right\|^2_{L^2(\mathbb{R}^2)} \\
&= R_1 + R_2 + R_3 + R_4 + \frac{1}{2} \int_0^t \left\| H^k u_{\epsilon}(s) \right\|^2_{L^2(\mathbb{R}^2)} .
\end{aligned}
\end{equation}
For the term $R_1$, from lemma \ref{lemma2.4}, we obtain that
\begin{equation*}
\begin{aligned}
R_{1}
&\leq 4T\left(2^{k + 1}  \left( T + 1 \right)^{(k + 1)(\delta + 1)}B^{k + 1} k! \right)^2,
\end{aligned}
\end{equation*}
taking $A\ge \left( 2 \left( T + 1 \right)^{\delta + 1}B \right)^2 + 1$, and note that $k + 1 \leq 2k$, thus we can deduce 
\begin{equation}\label{2-6}
\begin{aligned}
R_{1}
&\leq 4T\left(A^{k} k! \right)^2.
\end{aligned}
\end{equation}
For the term $R_2$, since $\delta>0$ large enough, from the induction hypothesis \eqref{induction hypothesis}, it follows that for all $0\leq t\leq T$
\begin{equation}\label{2-7}
\begin{aligned}
R_2 
&\leq 4k^2 \delta^2 \left( T + 1 \right)^{2\left( \delta - 1 \right)} \int_0^t \left\| \partial_y H^{k - 1} u_{\epsilon} \right\|^2_{L^2(\mathbb{R}^2)} ds \\
&\leq \frac{8}{C_1} \delta^2 \left( T + 1 \right)^{2\left( \delta + 1 \right)}  \left( A^k k! \right)^2.
\end{aligned}
\end{equation}
For $R_3$, by using the Minkowski inequality and the induction hypothesis \eqref{induction hypothesis}, it follows that 
\begin{equation}\label{R-3}
\begin{aligned}
R_3 
&\leq \frac{4}{C_1} \left( \sum_{j = 1}^k \binom{k}{j} \left( 8C \left( T + 1 \right)^{\delta + 1} \right)^{j + 1} \left( j - 2 \right)! \left( \int_0^t \left\| \partial_y H^{k - j} u_{\epsilon}(s) \right\|^2_{L^2(\mathbb{R}^2)} ds \right)^{\frac{1}{2}} \right)^2 \\
&\leq \frac{4}{C_1} \left( \sum_{j = 1}^k \binom{k}{j} \left( 8C \left( T + 1 \right)^{\delta + 1} \right)^{j + 1} \left( j - 2 \right)! \sqrt{\frac{2}{C_1}}  A^{k - j + 1}(k - j)! \right)^2\\
&\leq \frac{8\left( k! \right)^2}{\left( C_1 \right)^2} \left( \sum_{j = 2}^k \frac{1}{j \left( j - 1 \right)} (8C(T + 1)^{\delta + 1})^{j + 1} A^{k - j +1} + 2 \left( T + 1 \right)^{\delta + 1} C^2A^k \right)^2,
\end{aligned}
\end{equation}
then taking $A \ge (8C(T + 1)^{\delta + 1})^3$, since $j + 1 \leq 3\left( j - 1 \right), \forall j\geq2$, we have
\begin{equation*}
\begin{aligned}
R_3 
&\leq \frac{8}{\left( C_1 \right)^2} \left( 2\left( T + 1 \right)^{\delta + 1} C^2 A^k + \sum_{j = 2}^k \frac{1}{j(j - 1)} A^k \right)^2 \left( k! \right)^2.
\end{aligned}
\end{equation*}
Noting that
$$
\sum_{j = 2}^k \frac{1}{j \left( j - 1 \right)} = 1 - \frac{1}{k} \leq 1.
$$
Thus we have
\begin{equation}\label{2-8}
R_3 \leq \frac{8}{\left( C_1 \right)^2} \left( 1 +  2\left( T + 1 \right)^{\delta + 1} C^2 \right)^2 \left( A^k k! \right)^2.
\end{equation}

It remains to consider the term $R_4$. By using the Leibniz formula, it follows that for any $t\in[0, T]$
\begin{equation*}
\begin{aligned}
\left\| H^k u_{\epsilon}(t) \right\|_{L^2(\mathbb{R}^2)} &\leq \sum_{j = 0}^k \binom{k}{j} t^{j + k\delta} \sup_{0 \leq t \leq T} \left\| \partial_x^j \partial_y^{k - j} u_{\epsilon}(t) \right\|_{L^2(\mathbb{R}^2)} \\
&\leq t^{k (\delta+1)} \max_{0 \leq j \leq k} \binom{k}{j} \sum_{j = 0}^k \sup_{0 \leq t \leq T} \left\| \partial_x^j \partial_y^{k - j} u_{\epsilon}(t) \right\|_{L^2(\mathbb{R}^2)}.
\end{aligned}
\end{equation*}
Since $u_{\epsilon}\in C^{\infty}([0, T]; \mathscr S(\mathbb R^{2}))$, it follows that 
$$\sup_{0 \leq t \leq T} \left\| \partial_x^j \partial_y^{k - j} u_{\epsilon}(t) \right\|_{L^2(\mathbb{R}^2)}<\infty,$$
and therefore we have, for $k\ge 1$, 
\begin{equation}\label{2-9}
\left\| H^k u_{\epsilon}(t) \right\|^2_{L^2(\mathbb{R}^2)} \bigg|_{t = 0} = 0.
\end{equation}
Combining \eqref{2-6}, \eqref{2-7}, \eqref{2-8}, \eqref{2-9}, it follows that 
\begin{equation}\label{2-10}
\begin{aligned}
& \quad \left\| H^k u_{\epsilon}(t) \right\|^2_{L^2(\mathbb{R}^2)} + \frac{7}{4} C_1 \int_0^t \left\| \partial_y H^k u_{\epsilon}(s) \right\|^2_{L^2(\mathbb{R}^2)} ds \\
&\leq 4T\left(A^{k} k! \right)^2 + \frac{8}{C_1} \delta^2 \left( T + 1 \right)^{2\left( \delta + 1 \right)} \left( A^k k! \right)^2 \\
&\quad + \frac{8}{\left( C_1 \right)^2} \left( 1 + 2\left( T + 1 \right)^{\delta + 1} C^2 \right)^2 \left( A^k k! \right)^2 + \frac{1}{2} \int_0^t \left\| H^k u_{\epsilon}(s) \right\|^2_{L^2(\mathbb{R}^2)} ds\\
&= C_{5}\left( A^k k! \right)^2+\frac{1}{2} \int_0^t \left\| H^k u_{\epsilon}(s) \right\|^2_{L^2(\mathbb{R}^2)} ds,
\end{aligned}
\end{equation}
where
$$C_{5}=4T + \frac{8}{C_1} \delta^2 \left( T + 1 \right)^{2\left( \delta + 1 \right)} + \frac{8}{\left( C_1 \right)^2} \left( 1 + 2\left( T + 1 \right)^{\delta + 1} C^2 \right)^2.$$
By using Gronwall's inequality, one has
$$
\left\| H^k u_{\epsilon}(t) \right\|^2_{L^2(\mathbb{R}^2)} \leq C_{5}\left( 1 + T e^T \right)\left( A^k k! \right)^2.
$$
In view of \eqref{2-10}, we can get that
\begin{equation}\label{2-13}
\begin{aligned}
&\left\| H^k u_{\epsilon}(t) \right\|^2_{L^2(\mathbb{R}^2)} + \frac{7}{4} C_1 \int_0^t \left\| \partial_y H^k u(s) \right\|^2_{L^2(\mathbb{R}^2)} ds \\
&\leq C_{5}\left( A^k k! \right)^2 + \frac{1}{2} TC_{5}\left( 1 + Te^T \right) \left( A^k k! \right)^2 \\
&\le C_{5}(1+T+T^{2}e^{T})\left( A^k k! \right)^2.
\end{aligned}
\end{equation}
Finally, taking 
$$A\ge\max\left\{A_{0}, \left( 2 \left( T + 1 \right)^{\delta + 1}B \right)^2 + 1, (8C(T + 1)^{\delta + 1})^3, \sqrt{C_{5}(1+T+T^{2}e^{T})}\right\},$$
then with \eqref{2-13}, we have
\begin{equation*}
\left\| H^k u_{\epsilon}(t) \right\|^2_{L^2(\mathbb{R}^2)} + \frac{C_1}{2} \int_0^t \left\| \partial_y H^k u_{\epsilon}(s) \right\|^2_{L^2(\mathbb{R}^2)} ds \leq \left( A^{k + 1} k! \right)^2.
\end{equation*}
Since $A$ is independent of $0<\epsilon<1$, by the compactness and uniqueness of the solution, we can obtain that for all $0<t\le T$
\begin{equation*}
\left\| H^k u(t) \right\|^2_{L^2(\mathbb{R}^2)} + \frac{C_1}{2} \int_0^t \left\| \partial_y H^k u(s) \right\|^2_{L^2(\mathbb{R}^2)} ds \leq \left( A^{k + 1} k! \right)^2.
\end{equation*} 
\end{proof}
\bigskip
Now we give the proof of the Theorem \ref{Theorem}.

\textbf{Proof of Theorem \ref{Theorem}}
Let
$$
H_{\delta/2} = \frac{t^{\delta/2 + 1}}{\delta/2 + 1} \partial_x + t^{\delta/2} \partial_y, \quad H_\delta = H,
$$
then $\partial_x$ and $\partial_y$ can be represented as the linear combination of $H_{\delta/2}$ and $H_{\delta}$
$$
\begin{cases}
t^{\delta + 1} \partial_x = -\frac{\left( \delta + 2 \right) \left( \delta + 1 \right)}{ \delta} H_{\delta} +\frac{\left( \delta + 2 \right) \left( \delta + 1 \right)}{\delta} t^{\delta/2} H_{\delta/2}, \\
t^\delta \partial_y =  \frac{2\delta + 2}{\delta} H_{\delta} - \frac{\delta + 2}{ \delta} t^{\delta/2} H_{\delta/2}.
\end{cases}
$$
Then from the Plancherel theorem, one has
\begin{equation*}
\begin{aligned}
& \quad t^{\left( \delta + 1 \right)m} \left\| \partial_x^m u(t) \right\|_{L^2(\mathbb{R}^2)} \\
&=\left\| \mathscr{F} \left[ \left( -\frac{\left( \delta + 2 \right) \left( \delta + 1 \right)}{ \delta} H_{\delta} +\frac{\left( \delta + 2 \right) \left( \delta + 1 \right)}{\delta} t^{\delta/2} H_{\delta/2} \right)^m u(t) \right] \right\|_{L^2(\mathbb{R}^2)} \\
&=\left\| \left( a_1 + a_2 \right)^m \hat{u}(t) \right\|_{L^2(\mathbb{R}^2)},
\end{aligned}
\end{equation*}
where $\mathscr{F}$ is the Fourier transform define via
$$\mathscr{F}u(\xi)=\int_{\mathbb R^{2}}e^{-2\pi ix\cdot\xi}u(x)dx, \quad \forall u\in\mathscr S(\mathbb R^{2}),$$
and $a_{1}, a_{2}$ are the Fourier multipliers
\begin{equation*}
\begin{aligned}
&\mathscr{F} \left( -\frac{\left( \delta + 2 \right) \left( \delta + 1 \right)}{ \delta} H_{\delta} u(t) \right) \left( \xi_1, \xi_2 \right) = a_1 \left( \xi_1, \xi_2 \right) \hat{u}(t, \xi_1, \xi_2),\\
&\mathscr{F} \left( \frac{\left( \delta + 2 \right) \left( \delta + 1 \right)}{\delta} t^{\delta/2} H_{\delta/2} u(t) \right) \left( \xi_1, \xi_2 \right) = a_2 (\xi_1, \xi_2) \hat{u}(t, \xi_1, \xi_2).
\end{aligned}
\end{equation*}
Then by using the Proposition \ref{Proposition 4.1.}, there exists $A_1, A_2>0$ such that
$$
 \left\| H^m_{\delta} u(t) \right\|_{L^2(\mathbb{R}^2)} \le A_1^{m + 1} m!,\ \ \ \ \  \left\| H^m_{\delta/2} u(t) \right\|_{L^2(\mathbb{R}^2)}\le A_2^{m + 1} m!,
$$
it follows that for any $t\in]0, T]$, 
\begin{equation*}
\begin{aligned}
\left\| \left( a_1 + a_2 \right)^m \hat{u}(t) \right\|_{L^2(\mathbb{R}^2)} 
&\leq 2^m \left( \left\| a_1^m \hat{u}(t) \right\|_{L^2(\mathbb{R}^2)} + \left\| a_2^m \hat{u}(t) \right\|_{L^2(\mathbb{R}^2)} \right) \\
&\leq \tilde{C}^m \left( \left\| H^m_{\delta} u(t) \right\|_{L^2(\mathbb{R}^2)} + \left\| H^m_{\delta/2} u(t) \right\|_{L^2(\mathbb{R}^2)} \right) \\
&\leq \tilde{C}^m \left( 2 \tilde{A}^{m + 1} m! \right) = 2 \tilde{A} \left( \tilde{C} \tilde{A} \right)^m m! ,
\end{aligned}
\end{equation*}
with $  \tilde{A}=\max\{A_1, A_2\}$ and $\tilde C$ depends on $\delta$ and $T$, this implies 
\begin{equation}\label{3-1}
\sup_{0 < t \leq T} t^{\left( \delta + 1 \right)m} \left\| \partial_x^m u(t) \right\|_{L^2(\mathbb{R}^2)} \leq 2 \tilde{A}\left( \tilde{C} \tilde{A} \right)^m m!, \quad \forall m \in \mathbb{N}.
\end{equation}
Similar to \eqref{3-1}, we have
\begin{equation}\label{3-2}
\sup_{0 < t \leq T} t^{\delta n} \left\| \partial_y^n u(t) \right\|_{L^2(\mathbb{R}^2)} \leq 2 \tilde{A}\left( \tilde{C} \tilde{A} \right)^n n!, \quad \forall n \in \mathbb{N}.
\end{equation}
Combining the above two inequalities and integration by parts, we can obtain that 
\begin{equation*}
\begin{aligned}
&\sup_{0 < t \leq T} t^{\left( \delta + 1 \right)m + n \delta} \left\| \partial_x^m \partial_y^n u(t) \right\|_{L^2(\mathbb{R}^2)} \\
&\leq \sup_{0 < t \leq T} \left( t^{2\left( \delta + 1 \right)m} \left\| \partial_x^{2m} u(t) \right\|_{L^2(\mathbb{R}^2)} \right)^{\frac{1}{2}} \left( t^{2\delta n} \left\| \partial_y^{2n} u(t) \right\|_{L^2(\mathbb{R}^2)} \right)^{\frac{1}{2}} \\
&\leq \left( 2\tilde{A}  \left( \tilde{C}\tilde{A}  \right)^{2m} \left( 2m \right)! 2\tilde{A}  \left( \tilde{C} \tilde{A}  \right)^{2n} \left( 2n \right)! \right)^{\frac{1}{2}} \\
&\leq 2\tilde{A}  \left( 2 \tilde{C} \tilde{A}  \right)^{m + n} m! n!\leq L^{m + n + 1} m! n!,
\end{aligned}
\end{equation*}
if we take $L \ge 2\tilde{A} (\tilde{C} + 1)$. We have then finished the proof  Theorem \ref{Theorem}.

\bigskip
\noindent {\bf Acknowledgements.}
This work was supported by the NSFC (No.12031006) and the Fundamental
Research Funds for the Central Universities of China.

\end{document}